\newcommand{\U}{{\mathcal U}}
\newcommand{\0}{{\mathbf 0}}
\newcommand{\C}{{\mathbb C}}
\newcommand{\Z}{{\mathbb Z}}
\newcommand{\W}{{\mathcal W}}
\newcommand{\V}{{\mathcal V}}
\newcommand{\T}{{\mathcal T}}
\newcommand{\strat}{{\mathfrak S}}
\newcommand{\hyp}{{\mathbb H}}
\newcommand{\rank}{\mathop{\rm rank}\nolimits}
\newcommand{\Adot}{\mathbf A^\bullet}
\newcommand{\Cdot}{\mathbf C^\bullet}
\newcommand{\Ndot}{\mathbf  N^\bullet}
\newcommand{\vdual}{{\mathcal D}}
\newtheorem{defn0}{Definition}[section]
\newtheorem{prop0}[defn0]{Proposition}
\newtheorem{conj0}[defn0]{Conjecture}
\newtheorem{thm0}[defn0]{Theorem}
\newtheorem{lem0}[defn0]{Lemma}
\newtheorem{corollary0}[defn0]{Corollary}
\newtheorem{example0}[defn0]{Example}
\newtheorem{remark0}[defn0]{Remark}
\newtheorem{question0}[defn0]{Question}
\newenvironment{defn}{\begin{defn0}}{\end{defn0}}
\newenvironment{prop}{\begin{prop0}}{\end{prop0}}
\newenvironment{thm}{\begin{thm0}}{\end{thm0}}
\newenvironment{lem}{\begin{lem0}}{\end{lem0}}
\newenvironment{cor}{\begin{corollary0}}{\end{corollary0}}
\newenvironment{exm}{\begin{example0}\rm}{\end{example0}}
\newenvironment{rem}{\begin{remark0}\rm}{\end{remark0}}
\newcommand{\defref}[1]{Definition~\ref{#1}}
\newcommand{\propref}[1]{Proposition~\ref{#1}}
\newcommand{\thmref}[1]{Theorem~\ref{#1}}
\newcommand{\lemref}[1]{Lemma~\ref{#1}}
\newcommand{\corref}[1]{Corollary~\ref{#1}}
\newcommand{\exref}[1]{Example~\ref{#1}}
\newcommand{\secref}[1]{Section~\ref{#1}}
\newcommand{\mbf}[1]{{\mathbf #1}}
\title[Perverse Results on Parameterized Hypersurfaces]{Perverse Results on Milnor Fibers inside \\ Parameterized Hypersurfaces}
\author{Brian Hepler}
\address{Department of Mathematics\\
  Northeastern University\\
  Boston, Massachusetts 02115}
\email[B.~Hepler]{hepler.b@husky.neu.edu}
\author{David B. Massey}
\address{Department of Mathematics\\
  Northeastern University\\
  Boston, Massachusetts 02115}
\email[D.~Massey]{d.massey@neu.edu}
\begin{document}

\begin{abstract}
We discuss some results on parameterized hypersurfaces which follow quickly from results in the category of perverse sheaves.
\end{abstract}

\maketitle

\thispagestyle{fancy}

\lhead{}
\chead{}
\rhead{ }

\lfoot{}
\cfoot{}
\rfoot{}

\section{Introduction} Throughout this paper,  $\U$ will denote an open neighborhood of the origin in $\C^{n+1}$, $\W$ will denote an open subset of $\C^n$ (or an open subset of a finite number of disjoint copies of $\C^n$), $S$ will denote a finite set of $r$ points $\{p_1, \dots, p_r\}$ in $\W$, and $F:(\W, S)\rightarrow (\U, \0)$ will denote a finite, complex analytic map which is generically one-to-one such that $F^{-1}(\0)=S$.

We are interested in the germ of the image of $F$ at the origin. By shrinking $\W$ and $\U$ if necessary, we can, and do, assume  that $\W$ consists of $r$ disjoint, connected, open sets, $\W_1, \dots, \W_r$ and, for $1\leq i\leq r$, $F^{-1}(\0)\cap \W_i=\{p_i\}$. The case where $r=1$ is usually referred to as the {\it mono-germ} case, and the case where $r>1$ as the {\it multi-germ} case.

In our setting, the Finite Mapping Theorem \cite{grauertremmert} tells us that the image of $F$ is a complex analytic space of dimension $n$, i.e., is a hypersurface $X:=V(g)$ in $\U$, for some complex analytic $g:\U\rightarrow\C$. We will continue to use $F$ to denote the surjection $F:\W\rightarrow X$.

\medskip

Another way of thinking of $F:\W\rightarrow X$ is as a finite resolution of singularities. In particular, $F$ is a small resolution in the sense of Goresky-MacPherson and, consequently, the shifted constant sheaf $\Z_\W^\bullet[n]$ on $\W$ pushes forward by $F$ to the intersection cohomology complex ${\mathbf{I}}_{{}_X}^\bullet$ on $X$ (see \cite{inthom2}).

\medskip

The stalk cohomology of ${\mathbf{I}}_{{}_X}^\bullet$ is trivial to describe. For each $x\in X$, let $m(x)$ denote the number of points in the inverse image of $F$ (\textbf{without} multiplicity), i.e., $m(x):=\left|F^{-1}(x)\right|$.  Note that $m(\0)=r$. Then the stalk cohomology of ${\mathbf{I}}_{{}_X}^\bullet$ is given by, for all $x\in X$,
$$
H^k({\mathbf{I}}_{{}_X}^\bullet)_x\cong
\begin{cases}
\Z^{m(x)}, &\textnormal{ if } k=-n;\\
0, &\textnormal{ otherwise}.
\end{cases}
$$

\medskip

In this paper, we will use general properties and results from the derived category and the category of perverse sheaves to investigate the cohomology of Milnor fibers of complex analytic functions $h:X\rightarrow\C$. We outline these results below.

\vskip 0.3in

For $k\geq 1$,  let $X_k:=\{x\in X\ |\ m(x)=k\}$,
 and let 
$$
D:=\overline{\bigcup_{k\geq 2}X_k},
$$
which is the closure of the image of the double-point (or multiple-point) set with its reduced structure. Note that, since we are taking the closure, $D$ may contain points of $X_1$. Later, we shall show that $D$ is contained in the singular set $\Sigma X$ of $X$.

\medskip

Suppose that we have a complex analytic function $h: (X, \0)\rightarrow (\C, 0)$.

\bigskip

We are interested in results on the Milnor fiber, $M_{h,\0}$ of $h$ at $\0$. We remind the reader that, in this context in which the domain of $h$ is allowed to be singular,  a Milnor fibration still exists by the result of L\^e in \cite{relmono}, and the Milnor fiber at a point $x\in V(h)$, is given by
$$
M_{h,x} \ = \ B^\circ_\epsilon(x)\cap X\cap h^{-1}(a),
$$
where $B^\circ_\epsilon(x)$ is an open ball of radius $\epsilon$, centered at $x$, and $0<|a|\ll\epsilon\ll 1$ (and, technically, the intersection with $X$ is redundant, but we wish to emphasize that this Milnor fiber lives in $X$). We also care about the real link, $K_{{}_{X, x}}$, of $X$ at $x\in X$ \cite{milnorsing}, which is  given by
$$
K_{{}_{X, x}} \ := \ \partial B_\epsilon(x)\cap X =S_\epsilon(x)\cap X,
$$
where, again, $0<\epsilon\ll 1$.

\bigskip

We will need to consider the Milnor fiber of $h\circ F$ at each of the $p_i$  and the Milnor fiber of $h$ restricted to the $X_k$'s, which are equal to the intersections $X_k\cap M_{h, \0}$.

\bigskip

As $X$ itself may be singular, it is important for us to say what notion we will use for a ``critical point'' of $h$. We use the Milnor fiber to define:

\begin{defn}The topological/cohomological critical locus of $h$, is 
$$\Sigma_{{}_{\operatorname{top}}}h :=\{x\in V(h) \ | \ M_{h, x} \textnormal{ does not have the integral cohomology of a point}\}.$$
\end{defn}

\smallskip

\begin{rem}\label{rem:unstablelocus} Suppose, for instance, that $F$ is a stable unfolding of a finite map $f$, and that $h$ is the projection onto one of the unfolding parameters. Then a point  $x\in V(h)$ is a point in the image of $f$. If $f$ is stable at $x$, then $h$ is locally a topologically trivial fibration in a neighborhood of $x$; consequently, the Milnor fiber is contractible (as $V(h)$ is in a neighborhood  of a point), and $x\not\in \Sigma_{{}_{\operatorname{top}}}h$.

Thus, $\Sigma_{{}_{\operatorname{top}}}h$ is contained in the unstable locus of $f$.
\end{rem}

\bigskip

Now, $F$ induces a finite map $\widetilde F$ from each $M_{h\circ F, p_i}$ to $M_{h, \0}$, which can be stratified in the sense of Goresky and MacPherson \cite{inthom2} in such a way that the closure of each $X_k$ is a union of strata. From this, via a Riemann-Hurwitz-type argument, it is not difficult to show that the Euler characteristics are related by 
$$
\sum_{1\leq i\leq r}\chi(M_{h\circ F, p_i})=\sum_{k\geq 1}k\cdot\chi(X_k\cap M_{h, \0})= \chi(M_{h, \0})+\sum_{k\geq 2}(k-1)\cdot\chi(X_k\cap M_{h, \0}).
$$
Or, rearranging and writing $\widetilde \chi$ for the Euler characteristic of the reduced cohomology (in order to focus on vanishing cohomology), we obtain
\begin{equation}
\widetilde\chi(M_{h, \0}) =r-1+\sum_i \widetilde\chi(M_{h\circ F, p_i}) - \sum_{k\geq 2}(k-1)\cdot\chi(X_k\cap M_{h, \0}).\tag{$\star$}
\end{equation}

Equation ($\star$) is particularly interesting in the case where the reduced cohomology of $M_{h,\0}$ is concentrated in a single degree and the reduced cohomology of $M_{h\circ F,p_i}$ is zero.

\bigskip

In this paper, we show:
\begin{enumerate}

\item If $s:=\dim_\0\Sigma_{{}_{\operatorname{top}}}h$, then the reduced cohomology of $M_{h, \0}$ can be non-zero only in degrees $k$ where $n-1-s\leq k\leq n-1$, and is free Abelian in degree $n-1-s$. 

In particular, if $\0$ is an isolated point in $\Sigma_{{}_{\operatorname{top}}}h$, then $M_{h, \0}$ has the cohomology of a bouquet of $(n-1)$-spheres. 

\smallskip

\item As discussed above, there is a relationship between the reduced Euler characteristics of the Milnor fiber $M_{h,\0}$, the Milnor fibers $M_{h\circ F,p_i}$, and the Milnor fibers of the $X_k$'s, given by
$$
\widetilde\chi(M_{h, \0}) =r-1+\sum_i \widetilde\chi(M_{h\circ F, p_i}) - \sum_{k\geq 2}(k-1)\cdot\chi(X_k\cap M_{h, \0}).
$$

\smallskip

\item There is a perverse sheaf $\Ndot$, supported on $D$, with the properties that:

\smallskip

\begin{itemize} 
\item for all $x\in D$, the stalk cohomology of $\Ndot$ at $x$ is (possibly) non-zero in a single degree, degree $-n+1$, where it is isomorphic to $\Z^{m(x)-1}$;
\smallskip

\item With some special assumptions on $h$, there is a long exact sequence, relating the Milnor fiber of $h$, the Milnor fibers of $h\circ F$, and the hypercohomology of the Milnor fiber of $h$ restricted to $D$ with coefficients in $\Ndot[-n+1]$, given by

\smallskip

$\cdots\rightarrow \widetilde\hyp^{j-1}(D\cap M_{h,\0}; \Ndot[-n+1])\rightarrow \widetilde H^{j}(M_{h, \0};\Z)\rightarrow\hfill$

\smallskip

$\hfill \bigoplus_i\widetilde H^{j}(M_{h\circ F, p_i};\Z)\rightarrow \widetilde\hyp^{j}(D\cap M_{h,\0}; \Ndot[-n+1])\rightarrow\cdots \ ,$

\smallskip

where the reduced cohomology with coefficients in $\Ndot[-n+1]$ has the special meaning of reducing the rank by $r-1$ in degree zero and having no effect in other degrees.

\smallskip

\noindent This long exact sequence is compatible with the Milnor monodromy automorphisms in each degree.

\smallskip

\item In particular, if $S\cap \Sigma(h\circ F)=\emptyset$, then the reduced cohomology $\widetilde H^j(M_{h,\0}; \Z)$ is isomorphic to the reduced hypercohomology $\widetilde\hyp^{j-1}\left(D\cap M_{h, \0}; \Ndot[-n+1]\right)$, by an isomorphism which commutes with the respective Milnor monodromies.
\end{itemize}

\smallskip

\item Suppose that $\0$ is an isolated point in $\Sigma_{{}_{\operatorname{top}}}h$ and that $S\cap\Sigma(h\circ F)=\emptyset$. Then, 
$$\widetilde H^{n-1}(M_{h, \0}; \Z)\  \cong \ \Z^\omega \  \cong  \ \ \widetilde \hyp^{n-2}\left(D\cap M_{h, \0}; \Ndot[-n+1] \right),$$
where $\omega:= (-1)^{n-1}\left[(r-1) -\sum_{k\geq 2}(k-1)\chi(X_k\cap M_{h,\0})\right]$.

\smallskip

\item Suppose that $n=2$ and that $F$ is a one-parameter unfolding of a parameterization $f$ of a plane curve singularity with $r$ irreducible components at the origin. Let $t$ be the unfolding parameter and suppose that the only singularities of $M_{t, \0}$ are nodes, and that there are $\delta$ of them.  Recall that $X=V(g)$, and let $g_0:=g_{|_{V(t)}}$. Then, we recover the classical formula for the Milnor number of $g_0$, as given in Theorem 10.5 of \cite{milnorsing}:
$$
\mu_\0 \left ( g_0 \right )= 2 \delta -r + 1.
$$

\end{enumerate}

\bigskip

\section{A Standard Vanishing Result}

Before we state the only result of this section, we need to establish a convention for a degenerate case: the reduced cohomology of the  empty set.

\medskip

\noindent {\bf Convention}: We define the reduced cohomology of the empty set, $\widetilde H^k(\emptyset; \Z)$, to be zero in all degrees other than degree $-1$, and we define $\widetilde H^{-1}(\emptyset; \Z)=\Z$.

We do this so that the stalk cohomology at $p$ of the vanishing cycles of the constant sheaf along a complex analytic function $f:(E, p)\rightarrow (\C, 0)$ always yields the reduced cohomology of the Milnor fiber of $f$ at $p$, even in the case where $f$ is identically zero on $E$. This is true because, if $B$ is the intersection with $E$ of a small open ball around $p$ in some ambient affine space (after embedding), then
$$
H^k(\phi_f\Z_E^\bullet)_p\cong H^{k+1}(B, M_{f, p};\Z).
$$
One then looks at the long exact sequence of the pair $(B, M_{f, p})$, paying special attention to the case where $M_{f, p}=\emptyset$, i.e., the case where $f$ is identically zero.

\bigskip

The following result is, by now, a well-known consequence of the general theory of perverse sheaves and vanishing cycles. Nonetheless, we give a quick proof.

\begin{prop}\label{prop:leprop}  If $s:=\dim_\0\Sigma_{{}_{\operatorname{top}}}h$, then the reduced cohomology of $M_{h, \0}$ can be non-zero only in degrees $k$ where $n-1-s\leq k\leq n-1$, and is free Abelian in degree $n-1-s$. 

In particular, if $\0$ is an isolated point in $\Sigma_{{}_{\operatorname{top}}}h$, then $M_{h, \0}$ has the cohomology of a bouquet of $(n-1)$-spheres. 
\end{prop}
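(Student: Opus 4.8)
The plan is to transport the statement into the category of perverse sheaves via vanishing cycles. Since $X=V(g)$ is a hypersurface in $\U\subseteq\C^{n+1}$, it is a local complete intersection of pure dimension $n$, so the shifted constant sheaf $\Z_X^\bullet[n]$ is a perverse sheaf on $X$. The perverse vanishing cycle functor $\phi_h[-1]$ carries perverse sheaves to perverse sheaves, so $\Pdot:=\phi_h[-1]\Z_X^\bullet[n]$ is perverse (and is supported on $V(h)$). Using the identity $H^j(\phi_h\Z_X^\bullet)_x\cong\widetilde H^j(M_{h,x};\Z)$ (valid even when $M_{h,x}=\emptyset$, by the stated convention) and keeping track of the shifts, one obtains
$$
H^k(\Pdot)_x\ \cong\ \widetilde H^{\,k+n-1}(M_{h,x};\Z)
$$
for all $x\in V(h)$. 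In particular $\Pdot$ has nonzero stalk exactly at those $x$ where $M_{h,x}$ fails to have the cohomology of a point, so $\supp\Pdot=\Sigma_{{}_{\operatorname{top}}}h$, which has dimension $s$ at $\0$.

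Next I would read off the degree range from perversity. The support condition gives $H^k(\Pdot)_\0=0$ for $k>0$; and since $\dim_\0\supp\Pdot=s$, the support and cosupport conditions together force $H^k(\Pdot)_\0=0$ for $k<-s$. (This is the standard fact that the stalk cohomology of a perverse sheaf at a point is concentrated in degrees between $-\dim(\supp)$ and $0$: for $s=1$ it is the computation of $Rj_*$ of a local system across a puncture, and the general case follows by induction on $s$ after refining the stratification so that $\{\0\}$ is a stratum.) Feeding this into the displayed isomorphism yields $\widetilde H^j(M_{h,\0};\Z)=0$ unless $n-1-s\le j\le n-1$.

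It remains to obtain freeness in degree $n-1-s$ and to deduce the final assertion. Here abstract perversity is not enough — a skyscraper $i_{\0 *}(\Z/2)$ is perverse — so one must exploit that $\Pdot$ is vanishing cycles of the \emph{constant} sheaf on a local complete intersection. The point is that the argument above runs verbatim with coefficients in an arbitrary field $k$, since $k_X^\bullet[n]$ is again perverse and the $k$-coefficient topological critical locus is contained in $\Sigma_{{}_{\operatorname{top}}}h$; hence $\widetilde H^j(M_{h,\0};k)=0$ for $j<n-1-s$ and every field $k$. A universal-coefficients argument then forces $\widetilde H_j(M_{h,\0};\Z)=0$ for $j\le n-2-s$ (kill $\widetilde H_j(M_{h,\0};\Z/p)$ and $\widetilde H_j(M_{h,\0};\Q)$ in that range, using that $M_{h,\0}$ has finitely generated homology), whence $\widetilde H^{n-1-s}(M_{h,\0};\Z)\cong\operatorname{Hom}\bigl(\widetilde H_{n-1-s}(M_{h,\0};\Z),\Z\bigr)$ is free Abelian. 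For the ``in particular'', take $s=0$: then $\widetilde H^*(M_{h,\0};\Z)$ is concentrated in degree $n-1$ and free there, i.e.\ it is the integral cohomology of a bouquet of $(n-1)$-spheres. I expect the main obstacle to be precisely this last step — the torsion-freeness in the bottom degree — since it is the only point where one must go beyond the formal properties of perverse sheaves and use the specific geometry (alternatively one could invoke Kato--Matsumoto-type connectivity of Milnor fibers on complete intersections).
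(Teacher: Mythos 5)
Your proposal is correct and follows essentially the same route as the paper: perversity of the shifted constant sheaf on the hypersurface (a local complete intersection), exactness of $\phi_h[-1]$ on perverse sheaves, the support-dimension bound on stalk degrees, and a universal-coefficient argument with field coefficients to get torsion-freeness in the bottom degree. The only cosmetic difference is that the paper runs the UCT directly on cohomology with $\Z/p\Z$ coefficients (vanishing of $\widetilde H^{n-2-s}(M_{h,\0};\Z/p\Z)$ for all $p$ kills the torsion of $\widetilde H^{n-1-s}(M_{h,\0};\Z)$), whereas you pass through homology; both are fine.
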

\begin{proof} 
By the result of L\^e in \cite{levan}, if $X$ is a purely $n$-dimensional local complete intersection, and $S$ is a $d$-dimensional stratum in a Whitney stratification of $X$, then the complex link of $S$ has the homotopy-type of a finite bouquet of $(n-1-d)$-spheres.

The cohomological implication is that the constant sheaves $\Z_X^\bullet[n]$ and $\left(\Z/p\Z\right)_X^\bullet[n]$, for $p$ prime, are perverse sheaves. Consequently, the shifted vanishing cycles 
$$\phi_h[-1]\Z_X^\bullet[n] \hskip 0.1in\textnormal{and}\hskip 0.1in\phi_h[-1]\left(\Z/p\Z\right)_X^\bullet[n]$$
 are also perverse, and have support contained in the closure $\overline{\Sigma_{{}_{\operatorname{top}}}h}$.
 
 Hence, these vanishing cycles have possibly non-zero stalk cohomology in degrees $k$ such that $-s\leq k\leq 0$. This means that the reduced cohomology of the Milnor fiber of $h$ at $\0$, with coefficients in $\Z$ or $\Z/p\Z$, is possibly non-zero in degrees $n-1-s$ and $n-1$. This proves the result, except for the free Abelian claim in degree $n-1-s$.
 
 However, that is the point of the $\Z/p\Z$ discussion. As $\widetilde H^{n-2-s}(M_{h,\0}; \Z/p\Z)=0$, for all $p$, the Universal Coefficient Theorem tells us that $\widetilde H^{n-1-s}(M_{h,\0}; \Z)$ has no torsion.
\end{proof}

\medskip

For a stable unfolding $F$ with an isolated instability and projection $h$ onto an unfolding parameter, the result above is a cohomological generalization of the result of Mond in \cite{mondimagemilnor}.

\bigskip

\begin{defn} If $\0$ is an isolated point in $\Sigma_{{}_{\operatorname{top}}}h$, then we define the {\bf Milnor number of $h$ at $\0$}, $\mu_\0(h)$, to be the rank of $\widetilde H^{n-1}(M_{h, \0}; \Z)$. 
\end{defn}

\bigskip

\section{The Push-forward of the Constant Sheaf}

General references for the derived category techniques in this section and the next are \cite{kashsch}, \cite{dimcasheaves}, and \cite{inthom2}. As we are always considering the derived category, we follow the usual practice of omitting the ``R''s in front of right derived functors.

\bigskip

We made the following definition in the introduction.

\begin{defn} Let ${\mathbf{I}}_{{}_X}^\bullet$ denote the (derived) push-forward of the constant sheaf $\Z_\W^\bullet[n]$; that is, ${\mathbf{I}}_{{}_X}^\bullet:=F_*\Z_\W^\bullet[n]$. 
\end{defn}

\smallskip

\noindent In the notation ${\mathbf{I}}_{{}_X}^\bullet$, we will justify subscripting by $X$, rather than by $F$, below.

\smallskip

\begin{prop}\label{prop:bigiprop} The complex ${\mathbf{I}}_{{}_X}^\bullet =F_*\Z_\W^\bullet[n]$ has the following properties:
\begin{enumerate}
\item ${\mathbf{I}}_{{}_X}^\bullet$ is the intersection cohomology complex with the constant $\Z$ local system.
\smallskip
\item The stalk cohomology of ${\mathbf{I}}_{{}_X}^\bullet$ is given by for all $x\in X$,
$$
H^k({\mathbf{I}}_{{}_X}^\bullet)_x\cong
\begin{cases}
\Z^{m(x)}, &\textnormal{ if } k=-n;\\
0, &\textnormal{ otherwise}.
\end{cases}
$$
\smallskip
\item The complex ${\mathbf{I}}_{{}_X}^\bullet$ is self-Verdier dual, i.e., 
$$\vdual {\mathbf{I}}_{{}_X}^\bullet \cong {\mathbf{I}}_{{}_X}^\bullet.$$
\smallskip
\item Suppose $x\in X$, and $j_x$ denotes the inclusion of $x$ into $X$. Then,
$$
j_x^!{\mathbf{I}}_{{}_X}^\bullet\cong \vdual j_x^*\vdual{\mathbf{I}}_{{}_X}^\bullet\cong \vdual j_x^*{\mathbf{I}}_{{}_X}^\bullet
$$
and so the costalk cohomology is given by
$$
H^k(j^!_x{\mathbf{I}}_{{}_X}^\bullet)\cong
\begin{cases}
\Z^{m(x)}, &\textnormal{ if } k=n;\\
0, &\textnormal{ otherwise}.
\end{cases}
$$
\smallskip
\item There is a canonical surjection of perverse sheaves $\Z_X[n]\xrightarrow{c} {\mathbf{I}}_{{}_X}^\bullet$, which induces the diagonal map on the stalk cohomology. 
\end{enumerate}
\end{prop}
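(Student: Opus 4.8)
The plan is to obtain parts (1)--(4) as formal consequences of two structural facts --- that $F$ is finite (hence proper) with smooth source, and that $F$ is generically one-to-one --- and to argue (5), which carries the only genuine content, by hand. For (1): since $F$ is finite, every fiber is $0$-dimensional, so $\{x\in X\mid \dim F^{-1}(x)\geq k\}=\emptyset$ for every $k>0$ and the numerical conditions defining a small resolution in the sense of Goresky--MacPherson hold vacuously; together with smoothness of $\W$, properness of $F$, and the fact that $F$ restricts to an isomorphism over the dense open set $X\setminus D$ (dense because $F$ being generically one-to-one forces $D$ to be a proper closed analytic subset of the purely $n$-dimensional $X$), this exhibits $F\colon\W\to X$ as a small resolution, and the standard computation of intersection cohomology through a small resolution (\cite{inthom2}) identifies $F_*\Z_\W^\bullet[n]$ with the intersection cohomology complex ${\mathbf{I}}_{{}_X}^\bullet$ carrying the constant $\Z$ local system.

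For (2) and (3): finiteness of $F$ gives $j_x^*F_*\Z_\W^\bullet[n]\cong\bigoplus_{y\in F^{-1}(x)}(\Z_\W^\bullet[n])_y$, and each of the $m(x)$ summands is $\Z$ in degree $-n$ since $\W$ is a manifold, which is (2). Since Verdier duality commutes with proper push-forward, $\vdual{\mathbf{I}}_{{}_X}^\bullet\cong F_*\vdual_\W(\Z_\W^\bullet[n])$, and $\vdual_\W\Z_\W^\bullet\cong\Z_\W^\bullet[2n]$ because $\W$ is a canonically oriented complex $n$-manifold, so $\vdual_\W(\Z_\W^\bullet[n])\cong\Z_\W^\bullet[n]$ and (3) follows; routing through the explicit small resolution in this way also avoids any torsion subtleties in asserting self-duality of $\mathbf{IC}$ over $\Z$. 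Part (4) is then purely formal: for the inclusion $j_x$ of a point one always has $j_x^!\cong\vdual\,j_x^*\,\vdual$, which combined with (3) gives $j_x^!{\mathbf{I}}_{{}_X}^\bullet\cong\vdual\,j_x^*{\mathbf{I}}_{{}_X}^\bullet$; applying $\vdual=R\operatorname{Hom}(-,\Z)$ to the complex $j_x^*{\mathbf{I}}_{{}_X}^\bullet$, which by (2) is the free group $\Z^{m(x)}$ placed in degree $-n$ and hence has no $\operatorname{Ext}$ contribution, produces $\Z^{m(x)}$ in degree $n$.

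The step I expect to be the main obstacle is (5). The canonical map $c$ is the $[n]$-shift of the unit of adjunction $\Z_X^\bullet\to F_*F^*\Z_X^\bullet=F_*\Z_\W^\bullet$; on degree $-n$ stalk cohomology at $x$ it is the restriction map $\Z\to\bigoplus_{y\in F^{-1}(x)}\Z$ that sends a constant germ to the tuple of its constant germs on the $m(x)$ sheets, i.e.\ the diagonal. What remains is to see that $c$ is an epimorphism of perverse sheaves. For this I would note that $\Z_X^\bullet[n]$ is perverse because $X=V(g)$ is a hypersurface, hence a local complete intersection --- the same input invoked in the proof of \propref{prop:leprop} --- while ${\mathbf{I}}_{{}_X}^\bullet$ is perverse by (1); then the cokernel of $c$, formed in the abelian category of perverse sheaves, is a quotient perverse sheaf of ${\mathbf{I}}_{{}_X}^\bullet$ supported on the proper closed analytic subset over which $c$ fails to be an isomorphism (it is an isomorphism over the smooth locus of $X$ on which $F$ is one-to-one). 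But ${\mathbf{I}}_{{}_X}^\bullet$, being the intermediate extension of a local system from that dense open set, has no nonzero quotient perverse sheaf supported on a smaller-dimensional closed subset; hence $\coker c=0$ and $c$ is surjective.
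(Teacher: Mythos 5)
Your proposal is correct and follows essentially the same route as the paper: parts (2)--(4) are argued identically (stalks of a finite push-forward, commutation of Verdier duality with proper push-forward plus self-duality of $\Z_\W^\bullet[n]$ on the smooth source, and $j_x^!\cong\vdual j_x^*\vdual$ with a torsion-free stalk), part (1) uses the small-resolution argument that the paper offers as its alternative justification, and part (5) uses the same intermediate-extension argument (no nonzero quotient perverse sheaf supported on a proper closed subset) to kill the cokernel of $c$.
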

\begin{proof} 

\ 

\begin{enumerate}

\item As $\Z_\W^\bullet[n]$ is the intersection cohomology complex on $\W$, with constant coefficients, and ${\mathbf{I}}_{{}_X}^\bullet$ is its push-forward by a finite map,  the support and cosupport conditions trivially push forward, and  ${\mathbf{I}}_{{}_X}^\bullet$ is an intersection cohomology complex on $X$. {\it A priori}, ${\mathbf{I}}_{{}_X}^\bullet$ could have ``twisted'' coefficients in a non-trivial local system on the regular part, $X_{\operatorname{reg}}$, of $X$. However, as we are assuming that $F$ is generically one-to-one, $F$ induces a homeomorphism when restricted to a map from a generic subset of $\W$ to a generic subset of $X_{\operatorname{reg}}$. Thus, on a generic subset of $X_{\operatorname{reg}}$, the complex ${\mathbf{I}}_{{}_X}^\bullet$ restricts to the shifted constant sheaf, and so ${\mathbf{I}}_{{}_X}^\bullet$ is the intersection cohomology complex with the constant local system.

Alternatively, $F$ is a small resolution of $X$, and so the push-forward of the shifted constant sheaf yields intersection cohomology. \cite[See][pg. 121]{inthom2}

\medskip

\item The formula for the stalk cohomology of ${\mathbf{I}}_{{}_X}^\bullet$ is immediate since ${\mathbf{I}}_{{}_X}^\bullet:=F_*\Z_\W^\bullet[n]$.

\medskip

\item With a field for the base ring, the self-duality of ${\mathbf{I}}_{{}_X}^\bullet$ would follow from its  being the intersection cohomology complex.  However, since we are using $\Z$ as our base ring, we use that
$$
\vdual {\mathbf{I}}_{{}_X}^\bullet\cong \vdual F_*\Z_\W^\bullet[n]\cong F_!\vdual\left(\Z_\W^\bullet[n]\right)\cong F_!\Z_\W^\bullet[n]\cong F_*\Z_\W^\bullet[n],
$$
where the last isomorphism follows from the fact that $F$ is finite, and hence proper.

\medskip

\item Using the self-duality of ${\mathbf{I}}_{{}_X}^\bullet$, we find
$$
j_x^!{\mathbf{I}}_{{}_X}^\bullet\cong \vdual j_x^*\vdual{\mathbf{I}}_{{}_X}^\bullet\cong \vdual j_x^*{\mathbf{I}}_{{}_X}^\bullet.
$$

Therefore,
$$
H^k(j_x^!{\mathbf{I}}_{{}_X}^\bullet) \cong H^k(\vdual j_x^*{\mathbf{I}}_{{}_X}^\bullet) \cong\operatorname{Hom}(H^{-k}(j_x^*{\mathbf{I}}_{{}_X}^\bullet), \Z)\oplus \operatorname{Ext}(H^{-k+1}(j_x^*{\mathbf{I}}_{{}_X}^\bullet) , \Z).
$$
Hence, using our earlier description of the stalk cohomology, we find
$$
H^k(j^!_x{\mathbf{I}}_{{}_X}^\bullet)\cong
\begin{cases}
\Z^{m(x)}, &\textnormal{ if } k=n;\\
0, &\textnormal{ otherwise}.
\end{cases}
$$

\item There is always a canonical morphism of perverse sheaves from the shifted constant sheaf to intersection cohomology with the (shifted) constant local system, i.e., a canonical morphism $\Z_X[n]\xrightarrow{c} {\mathbf{I}}_{{}_X}^\bullet$. 

Because we are using $\Z$ as our base ring, instead of a field, ${\mathbf{I}}_{{}_X}^\bullet$ is {\bf not} a simple object in the Abelian category of perverse sheaves of $\Z$-modules. However, ${\mathbf{I}}_{{}_X}^\bullet$ is nonetheless the intermediate extension of the constant sheaf on $X_{\operatorname{reg}}$, and so has no non-trivial sub-perverse sheaves or quotient perverse sheaves with support contained in $\Sigma X$. Therefore, since our morphism induces an isomorphism when restricted to $X_{\operatorname{reg}}$, its cokernel must be zero, i.e., the morphism $c$ is a surjection. 

The description of the induced map on the stalks follows at once from the fact that 
$${\mathbf{I}}_{{}_X}^\bullet=F_*\Z_\W^\bullet[n]\cong F_*F^*\Z_X^\bullet[n].$$

\end{enumerate}

\end{proof}

\bigskip

As an immediate corollary to Item (1) above, we have the well-known:

\begin{cor} There is a containment $D\subseteq \Sigma X$.
\end{cor}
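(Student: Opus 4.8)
The plan is to combine Item (1) and Item (2) of \propref{prop:bigiprop} with the fact that $\Sigma X$ is closed. First I would recall that, by Item (1), $\ic_{{}_X}$ is the intersection cohomology complex on $X$ with constant $\Z$ coefficients; in particular, its restriction to the regular part $X_{\operatorname{reg}}$ is the shifted constant sheaf $\Z_{X_{\operatorname{reg}}}^\bullet[n]$. Hence, for every $x\in X_{\operatorname{reg}}$, the stalk cohomology $H^k(\ic_{{}_X})_x$ is $\Z$ for $k=-n$ and $0$ otherwise.

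Next I would compare this with the stalk formula of Item (2), which gives $H^{-n}(\ic_{{}_X})_x\cong \Z^{m(x)}$ for all $x\in X$. Comparing ranks at a point $x\in X_{\operatorname{reg}}$ forces $m(x)=1$. Equivalently, $X_k\cap X_{\operatorname{reg}}=\emptyset$ for every $k\geq 2$, so that $\bigcup_{k\geq 2}X_k\subseteq \Sigma X$.

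Finally, since $\Sigma X$ is a closed (analytic) subset of $X$, taking closures preserves the containment, giving
$$
D \ = \ \overline{\bigcup_{k\geq 2}X_k} \ \subseteq \ \Sigma X,
$$
as claimed. There is no real obstacle here; the only point requiring a word of care is that $\Sigma X$ is closed, which is standard for complex analytic spaces, and that the closure on the left is taken in $X$ so that the last step is immediate.
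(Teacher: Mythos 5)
Your proof is correct and is exactly the argument the paper intends when it calls the containment an immediate consequence of Item (1) of \propref{prop:bigiprop}: on $X_{\operatorname{reg}}$ the intersection cohomology complex has stalk $\Z$ in degree $-n$, so $m(x)=1$ there, and closedness of $\Sigma X$ finishes it. No further comment needed.
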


\smallskip

The containment above can easily be strict; this is, for instance, the case when one parameterizes the cusp.

\bigskip

\begin{rem} We wish to make the costalk cohomology of a complex of sheaves more intuitive for the reader. We continue with the notation $j_x$ from the proposition. Let $B^\circ_\epsilon(x)$ denote an open ball of radius $\epsilon$, centered at $x\in X$. Suppose that $\Adot$ is a bounded constructible complex of sheaves on $X$.

Then, the cohomology of $j_x^!\Adot$ is isomorphic to the hypercohomology of a pair:
$$
H^k\left(j_x^!\Adot\right)\cong \hyp^k\left(B^\circ_\epsilon(x)\cap X, \big(B^\circ_\epsilon(x)-\{x\}\big)\cap X;\, \Adot\right),
$$
for $0<\epsilon\ll 1$, and there exists the usual long exact sequence for this pair, in which 
$$
\hyp^k\left(\big(B^\circ_\epsilon(x)-\{x\}\big)\cap X;\, \Adot\right)\cong \hyp^k\left(K_{X, x};\, \Adot\right).
$$

In particular,
$$
H^k\left(j_x^!\Z_X^\bullet[n]\right)\cong \widetilde H^{n+k-1}(K_{X, x}; \Z).
$$

\smallskip

Note that, as $\Z_X^\bullet[n]$ is a perverse sheaf, $H^k\left(j_x^!\Z_X^\bullet[n]\right)=0$ for $k\leq -1$. This is the cohomological manifestation of the fact that the real link of $X$ is $(n-2)$-connected \cite{milnorsing}.
\end{rem}

\bigskip

\section{The Multiple-Point Complex}

We let $\Ndot$ denote the kernel of the morphism $c$ from Property 5 in \propref{prop:bigiprop}, so that, in the Abelian category of perverse sheaves, we have a short exact sequence (which corresponds to a distinguished triangle in the derived category):
\begin{equation}\label{equ:ses}
0\rightarrow\Ndot\rightarrow \Z_X[n]\xrightarrow{c}{\mathbf{I}}_{{}_X}^\bullet\rightarrow 0.\tag{$\dagger$}
\end{equation}

\medskip

In our current setting, the morphism $c$ is particularly simple to describe on the level of stalk cohomology. Since  
$${\mathbf{I}}_{{}_X}^\bullet=F_*\Z_\W^\bullet[n]\cong F_*F^*\Z_X^\bullet[n],
$$
the morphism $c$ agrees with the natural map
$$
\Z_X[n]\xrightarrow{c}F_*F^*\Z_X^\bullet[n].
$$
On the stalk cohomology at $x$, this is just the diagonal inclusion $\Z\hookrightarrow \Z^{m(x)}$ in the only non-zero degree, $-n$. 
From the long exact sequence on stalk cohomology for our short exact sequence, we conclude that the perverse sheaf $\Ndot$  has stalk cohomology given by
$$
H^k(\Ndot)_x\cong
\begin{cases}
\Z^{m(x)-1}, &\textnormal{ if } k=-n+1;\\
0, &\textnormal{ otherwise}.
\end{cases}
$$
In particular, the support of $\Ndot$ is $D$. 
Note that the stalk cohomology of $\Ndot$ at $\0$ is $\Z^{r-1}$.

\medskip 

\begin{rem} The reader may be wondering why the morphism $c$ has a non-zero kernel in the category of perverse sheaves. After all, on the level of stalks, the map $c$ induces inclusions; it seems as though $c$ should have a non-trivial cokernel, not kernel.

It is true that there is a complex of sheaves $\Cdot$ and a distinguished triangle in the derived category
$$
\Z_X[n]\xrightarrow{c}{\mathbf{I}}_{{}_X}^\bullet\rightarrow \Cdot \xrightarrow{[1]}\Z_X[n]
$$
in which the stalk cohomology of $\Cdot$ is non-zero only in degree $-n$ and, in degree $-n$, is isomorphic to the cokernel of map induced on the stalks by $c$. However, the complex $\Cdot$ is {\bf not} perverse; it is supported on a set of dimension $n-1$ and has non-zero cohomology in degree $-n$.

However, we can ``turn'' the triangle to obtain a distinguished triangle
$$
\Cdot[-1]\rightarrow\Z_X[n]\xrightarrow{c}{\mathbf{I}}_{{}_X}^\bullet\xrightarrow{[1]} \Cdot,
$$
where $\Cdot[-1]$ is, in fact, perverse. Thus, in the Abelian category of perverse sheaves $\Ndot=\Cdot[-1]$ is the kernel of the morphism $c$.
\end{rem}

\smallskip

\noindent\rule{1in}{1pt}

\bigskip

\begin{defn}\label{defn:rmpc} We refer to the perverse sheaf $\Ndot$ as the {\bf multiple-point complex} (of $F$ on $X$).
\end{defn}

\medskip

We want to list the important features of the multiple-point complex which we have already discussed.

\smallskip

\begin{thm}\label{thm:kprop} The multiple-point complex $\Ndot$ has the following properties:

\begin{enumerate}
\item There is a short exact sequence in the Abelian category of perverse sheaves on $X$:
\begin{equation}\label{equ:ses2}
0\rightarrow\Ndot\rightarrow \Z^\bullet_X[n]\xrightarrow{c}F_*\Z_\W^\bullet[n]\rightarrow 0.\tag{$\ddagger$}
\end{equation}
In particular, $\Ndot$ is a perverse sheaf.
\smallskip
\item The support of $\Ndot$  is $D$.
\smallskip
\item The stalk cohomology of $\Ndot$ at a point $x\in D$ is zero in all degrees other than $-n+1$, and
$$
H^{-n+1}(\Ndot)_x\cong \Z^{m(x)-1}.
$$
In particular, the stalk cohomology of $\Ndot$ at the origin is $\Z^{r-1}$.
\smallskip
\item The costalk cohomology of $\Ndot$ is given by, for all $x\in X$,
$$
H^k(j_x^!\Ndot)\cong
\begin{cases}
\widetilde H^{n+k-1}(K_{X,x}; \Z), &\textnormal{ if } 0\leq k\leq n-1;\\
0, &\textnormal{ otherwise}.
\end{cases}
$$

\end{enumerate}

\end{thm}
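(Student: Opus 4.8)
Parts (1)--(3) have, in effect, already been established in the discussion preceding \defref{defn:rmpc}: part (1) is the definition of $\Ndot$ as the kernel of the surjection $c$ of \propref{prop:bigiprop}(5), together with the identification ${\mathbf{I}}_{{}_X}^\bullet = F_*\Z_\W^\bullet[n]$; part (3) comes from the long exact sequence on stalk cohomology obtained by applying $j_x^*$ to $(\ddagger)$ and the fact that $c$ induces the diagonal $\Z \hookrightarrow \Z^{m(x)}$ in the single nonzero degree $-n$; and part (2) is then read off from part (3), since $D = \overline{\{x : m(x) \geq 2\}}$. So the plan is to concentrate on part (4).

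The main tool is the long exact sequence obtained by applying the triangulated functor $j_x^!$ to $(\ddagger)$:
$$\cdots \to H^{k-1}\big(j_x^!F_*\Z_\W^\bullet[n]\big) \to H^k(j_x^!\Ndot) \to H^k\big(j_x^!\Z_X^\bullet[n]\big) \to H^k\big(j_x^!F_*\Z_\W^\bullet[n]\big) \to \cdots.$$
By \propref{prop:bigiprop}(4) the costalk $H^m(j_x^!F_*\Z_\W^\bullet[n]) = H^m(j_x^!{\mathbf{I}}_{{}_X}^\bullet)$ vanishes for every $m \neq n$. Hence, for all $x \in X$ and all $k \leq n-1$, the two flanking terms vanish and the sequence gives an isomorphism $H^k(j_x^!\Ndot) \cong H^k(j_x^!\Z_X^\bullet[n]) \cong \widetilde H^{n+k-1}(K_{{}_{X, x}}; \Z)$ (this last identification being the one from the Remark above). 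This already yields the stated formula for $0 \leq k \leq n-1$, and also for $k < 0$: there $n+k-1 \leq n-2$, and since $X$ is purely $n$-dimensional the real link $K_{{}_{X, x}}$ is $(n-2)$-connected, so the reduced cohomology group vanishes.

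It remains to prove that $H^k(j_x^!\Ndot) = 0$ for $k \geq n$. The long exact sequence alone does not give this — it only produces an injection $H^n(j_x^!\Ndot) \hookrightarrow \widetilde H^{2n-1}(K_{{}_{X, x}};\Z)$ — and, since we are working over $\Z$ and not over a field, one cannot finesse it by appealing to perversity of $\vdual\Ndot$ (the naive perverse $t$-structure over $\Z$ is not Verdier self-dual). Instead, I would use that, by part (3), $\Ndot$ has stalk cohomology concentrated in the single degree $-n+1$; the canonical truncations then identify $\Ndot$ with $\mathcal{N}[n-1]$, where $\mathcal{N} := \mathcal{H}^{-n+1}(\Ndot)$ is an ordinary constructible sheaf on $X$ with support $D$. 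Since $F$ is generically one-to-one, $D$ has complex dimension $\leq n-1$, hence real dimension $\leq 2n-2$, and its link $S_\epsilon(x) \cap D$ at $x$ has real dimension $\leq 2n-3$. Describing the costalk by a pair exactly as in the Remark,
$$H^k(j_x^!\mathcal{N}) \cong \hyp^k\big(B^\circ_\epsilon(x) \cap D,\ (B^\circ_\epsilon(x) \setminus \{x\}) \cap D;\ \mathcal{N}\big),$$
the associated long exact sequence has for its ``absolute'' terms the stalk $\mathcal{N}_x$ (concentrated in degree $0$) and for its ``link'' terms the groups $\hyp^k(S_\epsilon(x) \cap D;\mathcal{N})$, which vanish for $k > 2n-3$ because $\mathcal{N}$ is an honest sheaf on a space of real dimension $\leq 2n-3$; hence $H^k(j_x^!\mathcal{N}) = 0$ for $k \geq 2n-1$, and so $j_x^!\Ndot = (j_x^!\mathcal{N})[n-1]$ satisfies $H^k(j_x^!\Ndot) = 0$ for $k \geq n$. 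This completes the proof.

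The step I expect to be the main obstacle is precisely the last one — extracting the sharp top degree $n-1$ of the costalk. Over a field it would be immediate from $\vdual\Ndot$ being perverse; over $\Z$ that is not available, so one has to exploit the more special fact that $\Ndot$ is, up to shift, an ordinary constructible sheaf supported on the $(n-1)$-dimensional set $D$, and run the dimension count above. Everything else is routine bookkeeping with the long exact sequence of $(\ddagger)$ and the stalk/costalk formulas for $\Z_X^\bullet[n]$ and ${\mathbf{I}}_{{}_X}^\bullet$ that are already in hand.
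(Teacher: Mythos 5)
Your proposal is correct, and for parts (1)--(3) it coincides with the paper, which offers no separate proof of this theorem but presents it as a summary of the discussion surrounding the short exact sequence $(\dagger)$. Part (4) is the only item not explicitly argued in the text, and your treatment of it is the right one: applying $j_x^!$ to $(\ddagger)$ and using \propref{prop:bigiprop}(4) together with the identification $H^k(j_x^!\Z_X^\bullet[n])\cong \widetilde H^{n+k-1}(K_{{}_{X,x}};\Z)$ from the Remark handles all degrees $k\leq n-1$, and you correctly identify that the vanishing for $k\geq n$ does \emph{not} follow from that long exact sequence (which only yields an injection of $H^n(j_x^!\Ndot)$ into $\widetilde H^{2n-1}(K_{{}_{X,x}};\Z)$) nor from a duality-plus-perversity argument over $\Z$. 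Your resolution --- writing $\Ndot\cong\mathcal N[n-1]$ for an honest constructible sheaf $\mathcal N$ supported on the at-most-$(n-1)$-dimensional set $D$, and bounding $H^k(j_x^!\mathcal N)$ by the cohomological dimension $2n-3$ of the link $S_\epsilon(x)\cap D$ via the long exact sequence of the pair --- is sound and supplies a detail the paper leaves entirely implicit.
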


\medskip

\begin{cor} $D$ is purely $(n-1)$-dimensional (which includes the possibility of being empty).
\end{cor}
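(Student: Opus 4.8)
The plan is to read off both halves of the statement from the defining properties of $\Ndot$ collected in \thmref{thm:kprop}, using the support and cosupport conditions that characterize perverse sheaves. First, by Item~(3) of \thmref{thm:kprop} the stalk cohomology of $\Ndot$ is concentrated in the single degree $-n+1$, so $\Ndot$ is quasi-isomorphic to $\mathcal F[n-1]$, where $\mathcal F:=\mathcal H^{-n+1}(\Ndot)$ is a constructible sheaf (placed in degree $0$) with $\supp\mathcal F=D$. The support condition for the perverse sheaf $\Ndot$ then reads $\dim\supp\mathcal H^{-n+1}(\Ndot)\leq n-1$, i.e. $\dim D\leq n-1$. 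If $D=\emptyset$ there is nothing further to prove, so assume $D\neq\emptyset$.

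For the reverse inequality on each component I would argue by contradiction. Suppose $Z$ is an irreducible component of $D$ with $d:=\dim Z<n-1$. Choose a point $x$ in the dense open subset $Z^\circ\subseteq Z$ consisting of points lying neither on $\overline{D\setminus Z}$ nor on the singular locus of $Z$, and near which $\mathcal F$ restricts to a local system on $Z$; note $\dim Z^\circ=d$ and, since $x\in\supp\mathcal F$, the stalk $\mathcal F_x$ is nonzero. Near such an $x$ we have $D=Z$, so $\mathcal F\cong i_*\mathcal G$ where $i$ is the closed immersion of a neighborhood of $x$ in $Z^{\operatorname{reg}}$ and $\mathcal G$ is a local system with $\mathcal G_x\neq 0$. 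Factoring the inclusion $j_x$ of $\{x\}$ through $i$ and using $i^!i_*\cong\operatorname{id}$ for a closed immersion, together with the fact that $Z^{\operatorname{reg}}$ is a complex manifold of dimension $d$, one computes that $j_x^!\mathcal F$ has cohomology concentrated in degree $2d$, where it is isomorphic to $\mathcal G_x\neq 0$. Hence $j_x^!\Ndot\cong(j_x^!\mathcal F)[n-1]$ has $\mathcal H^{2d-n+1}(j_x^!\Ndot)\neq 0$.

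Now the cosupport condition for the perverse sheaf $\Ndot$ (equivalently, the support condition for $\vdual\Ndot$) forces $\dim\{x\in X:\mathcal H^{2d-n+1}(j_x^!\Ndot)\neq 0\}\leq 2d-n+1$. Since this set contains $Z^\circ$, we obtain $d\leq 2d-n+1$, i.e. $d\geq n-1$, contradicting $d<n-1$. Therefore every irreducible component of $D$ has dimension exactly $n-1$, which is the assertion.

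I expect the one genuinely technical point to be the local computation of the costalk $j_x^!\Ndot$ at a generic point of a hypothetical low-dimensional component: namely, justifying the reduction of $j_x^!\mathcal F$ to a costalk on the smooth locus of the component (via $\mathcal F\cong i_*i^*\mathcal F$ locally, $i^!i_*\cong\operatorname{id}$, and local constancy of $\mathcal F$ along $Z$ there) and pinning down that its cohomology sits in degree $2d$ and is nonzero. An alternative would be to substitute the costalk formula of \thmref{thm:kprop}(4) directly into the cosupport condition, but that route requires analyzing the real link $K_{X,x}$ at generic points of $D$, which seems less transparent than the sheaf-theoretic computation above.
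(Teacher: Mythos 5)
Your argument is correct and is essentially the paper's: the paper disposes of the corollary in one line by citing the standard fact that the support of a perverse sheaf with generic stalk cohomology concentrated in degree $-n+1$ is purely $(n-1)$-dimensional, and your proof simply unpacks that fact via the support condition (for the upper bound) and the cosupport condition applied to the costalk at a generic smooth point of a component (for the lower bound). The degree bookkeeping ($j_x^!\Ndot$ concentrated in degree $2d-n+1$ at such a point, forcing $d\geq n-1$) is exactly right.
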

\begin{proof} This is immediate from $D$ being the support of a perverse sheaf which, on an open dense set of $D$, has non-zero stalk cohomology precisely in degree $-n+1$.
\end{proof}
\bigskip

\medskip

We defer the proof of a technical lemma, referred to in the following definition, until \secref{sec:reduction}.
 
\medskip

\begin{defn}\label{def:reduced}
We define the {\bf $(r-1)$-reduced hypercohomology} $\widetilde\hyp^{k}(M_{h,\0}\cap D; \Ndot[-n+1])$ to be $H^{k}(\phi_h\Ndot[-n+1])_\0$ and note that this is justified by \lemref{lem:reduced}, since, with this definition,
\begin{itemize}
\item If $k\neq -1\textnormal{ or }0$, then 
$$\widetilde\hyp^{k}(M_{h,\0}\cap D; \Ndot[-n+1])\cong \hyp^{k}(M_{h,\0}\cap D; \Ndot[-n+1]).$$
\smallskip
\item There is an equality of Euler characteristics 
$$\chi\left(\widetilde\hyp^{*}(M_{h,\0}\cap D; \Ndot[-n+1])\right)=\chi\left(\hyp^{k}(M_{h,\0}\cap D; \Ndot[-n+1])\right)-(r-1).$$
\smallskip
\item  If $\dim_\0 D\cap V(h)\leq n-2$, then $\widetilde\hyp^{-1}(M_{h,\0}\cap D; \Ndot[-n+1])=0$ and 
$$\operatorname{rank}\widetilde\hyp^{0}(M_{h,\0}\cap D; \Ndot[-n+1])= \operatorname{rank}\hyp^{0}(M_{h,\0}\cap D; \Ndot[-n+1]) -(r-1).$$
\smallskip

\item If $r=1$, then $\widetilde\hyp^{-1}(M_{h,\0}\cap D; \Ndot[-n+1])=0$ and 
$$\widetilde\hyp^{0}(M_{h,\0}\cap D; \Ndot[-n+1])\cong \hyp^{0}(M_{h,\0}\cap D; \Ndot[-n+1]).$$
\end{itemize}

\end{defn}

\bigskip

The following theorem is now easy to prove.

\begin{thm}\label{thm:les} There is a long exact sequence, relating the Milnor fiber of $h$, the Milnor fibers of $h\circ F$, and the hypercohomology of the Milnor fiber of $h$ restricted to $D$ with coefficients in $\Ndot$, given by

\smallskip

$\cdots\rightarrow \widetilde\hyp^{j-1}(D\cap M_{h,\0}; \Ndot[-n+1])\rightarrow \widetilde H^{j}(M_{h, \0};\Z)\rightarrow\hfill$

\smallskip

$\hfill\bigoplus_i\widetilde H^{j}(M_{h\circ F, p_i};\Z)\rightarrow \widetilde\hyp^{j}(D\cap M_{h,\0}; \Ndot[-n+1])\rightarrow\cdots \ .$

\smallskip

This long exact sequence is compatible with the Milnor monodromy automorphisms in each degree.

\end{thm}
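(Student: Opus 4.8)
The plan is to apply the vanishing cycle functor $\phi_h[-1]$ to the short exact sequence of perverse sheaves $(\ddagger)$ from \thmref{thm:kprop}, and then take stalk cohomology at $\0$.

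First I would recall that $\phi_h[-1]$ takes perverse sheaves to perverse sheaves, and, being a triangulated functor, carries the distinguished triangle underlying $(\ddagger)$ to a distinguished triangle
$$
\phi_h[-1]\Ndot \rightarrow \phi_h[-1]\Z^\bullet_X[n]\rightarrow \phi_h[-1]F_*\Z_\W^\bullet[n]\xrightarrow{[1]} .
$$
Now I would identify the three terms on the level of stalk cohomology at $\0$. For the middle term, the stalk cohomology $H^{j-n}(\phi_h[-1]\Z_X^\bullet[n])_\0$ is, up to a shift, the reduced cohomology $\widetilde H^{j-1}(M_{h,\0};\Z)$ of the Milnor fiber of $h$ at $\0$ (using the convention on the empty set established in \secref{}, so the case $h\equiv 0$ is handled). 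For the right-hand term, since $F$ is finite, hence proper, $\phi_h$ commutes with $F_*$, so $\phi_h[-1]F_*\Z_\W^\bullet[n]\cong F_*\phi_{h\circ F}[-1]\Z_\W^\bullet[n]$; taking the stalk at $\0$ and using that $F^{-1}(\0)=S=\{p_1,\dots,p_r\}$, the stalk cohomology becomes $\bigoplus_i \widetilde H^{j-1}(M_{h\circ F,p_i};\Z)$ (here proper base change makes the stalk at $\0$ of $F_*$ into the direct sum of stalks over the fiber). For the left-hand term, $H^{j-n}(\phi_h[-1]\Ndot)_\0$; after reindexing, $\phi_h[-1]\Ndot = \phi_h\Ndot[-1]$, and I would match this with $\phi_h\Ndot[-n+1]$ via the appropriate shift so that, by \defref{def:reduced}, $H^{j-1}(\phi_h\Ndot[-n+1])_\0$ is precisely the $(r-1)$-reduced hypercohomology $\widetilde\hyp^{j-1}(D\cap M_{h,\0};\Ndot[-n+1])$.

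Then I would write down the long exact sequence on stalk cohomology at $\0$ associated to the above distinguished triangle, and observe that, with the identifications just made, it is exactly the asserted sequence
$$
\cdots\rightarrow\widetilde\hyp^{j-1}(D\cap M_{h,\0};\Ndot[-n+1])\rightarrow\widetilde H^{j}(M_{h,\0};\Z)\rightarrow\bigoplus_i\widetilde H^{j}(M_{h\circ F,p_i};\Z)\rightarrow\widetilde\hyp^{j}(D\cap M_{h,\0};\Ndot[-n+1])\rightarrow\cdots .
$$
(One has to check that the degree-zero bookkeeping — the $r-1$ discrepancy built into the definition of $(r-1)$-reduced hypercohomology — exactly cancels the $r-1$ built into the reduced cohomology conventions on both the $M_{h,\0}$ side and the $\bigoplus_i M_{h\circ F,p_i}$ side; this is a consequence of the stalk of $\Ndot$ at $\0$ being $\Z^{r-1}$ and the $\Z_X[n]$-stalk mapping diagonally into the $F_*\Z_\W^\bullet[n]$-stalk $\Z^r$.) Finally, for the monodromy compatibility, I would note that the Milnor monodromy acts as a natural automorphism of the functor $\phi_h$, hence acts on the entire distinguished triangle compatibly; since $F_*$ and the shift functors commute with this action, the induced long exact sequence is a sequence of $\Z[t,t^{-1}]$-modules, which is the stated compatibility.

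The main obstacle I expect is not any of the triangulated-category formalism — that is routine — but rather pinning down precisely that the shift conventions and the two competing "reduced" conventions (reduced cohomology of Milnor fibers via the empty-set convention, versus $(r-1)$-reduced hypercohomology of \defref{def:reduced}) line up so that the maps in the long exact sequence are the honest ones with no stray rank shifts or sign issues in degree $0$. This is exactly why \defref{def:reduced} was set up as it was and why \lemref{lem:reduced} (deferred to \secref{sec:reduction}) is needed; the proof here should simply cite that lemma and the proper base change / $\phi_h \circ F_* \cong F_* \circ \phi_{h\circ F}$ compatibility, and then the long exact sequence drops out of the distinguished triangle.
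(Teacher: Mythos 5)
Your proposal is correct and matches the paper's own proof: the authors likewise apply the exact functor $\phi_h[-1]$ to the short exact sequence $(\ddagger)$ of perverse sheaves, identify the resulting stalk cohomologies at $\0$ via proper base change and \lemref{lem:reduced}, and obtain monodromy compatibility from the naturality of the Milnor monodromy automorphism. Your attention to the degree-zero bookkeeping between the two reduction conventions is exactly the content that the paper delegates to \lemref{lem:reduced}.
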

\begin{proof} We apply the exact functor $\phi_h[-1]$ to the short exact sequence \eqref{equ:ses2} which defines $\Ndot$ to obtain the following short exact sequence of perverse sheaves:
$$
0\rightarrow\phi_h[-1]\Ndot\rightarrow \phi_h[-1]\Z_X[n]\xrightarrow{\hat{c}}\phi_h[-1]F_*\Z_\W^\bullet[n]\rightarrow 0,
$$
where $\hat{c} = \phi_h[-1]c$. As the Milnor monodromy automorphism is natural, the maps in this short exact sequence commute with the Milnor monodromies.

By the induced long exact sequence on stalk cohomology and the lemma, we are finished.
\end{proof}

\medskip

In fact, the theorem gives us a refinement as to why one should think of $H^{k}(\phi_h\Ndot[-n+1])_\0$ as the $(r-1)$-reduced hypercohomology of $\hyp^{k}(M_{h,\0}\cap D; \Ndot[-n+1])$.

\begin{cor} Suppose that $\dim_\0\Sigma_{{}_{\operatorname{top}}}h\leq n-2$. Then, $\widetilde\hyp^{-1}(D\cap M_{h,\0}; \Ndot[-n+1])=0$ and $\widetilde\hyp^{0}(D\cap M_{h,\0}; \Ndot[-n+1])$ is free Abelian; consequently, $\widetilde\hyp^{0}(D\cap M_{h,\0}; \Ndot[-n+1])$ is obtained from $\hyp^{0}(D\cap M_{h,\0}; \Ndot[-n+1])$ by removing $r-1$ direct summands of $\Z$.

\end{cor}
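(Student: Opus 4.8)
The plan is to read the statement off the long exact sequence of \thmref{thm:les}, using \propref{prop:leprop} to annihilate its flanking terms in non-positive degrees. Set $s:=\dim_\0\Sigma_{{}_{\operatorname{top}}}h\le n-2$, so that $n-1-s\ge 1$. (When $n=1$ the hypothesis forces $\Sigma_{{}_{\operatorname{top}}}h=\emptyset$, hence $\phi_h[-1]\Z_X^\bullet[n]=0$ and, by \eqref{equ:ses2}, $\phi_h[-1]\Ndot=0$, so the statement is trivial; assume therefore $n\ge 2$.) By \propref{prop:leprop}, $\widetilde H^j(M_{h,\0};\Z)=0$ for every $j<n-1-s$ — in particular for all $j\le 0$ — and $\widetilde H^{n-1-s}(M_{h,\0};\Z)$ is free Abelian.

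The key preliminary step is the same vanishing for the domain Milnor fibers: $\widetilde H^j(M_{h\circ F,p_i};\Z)=0$ for all $j\le 0$. Applying the exact functor $\phi_h[-1]$ to \eqref{equ:ses2}, proper base change gives $\phi_h[-1]F_*\Z_\W^\bullet[n]=F_*\,\phi_{h\circ F}[-1]\Z_\W^\bullet[n]$, and this complex is a quotient of $\phi_h[-1]\Z_X^\bullet[n]$, whose support is contained in $\overline{\Sigma_{{}_{\operatorname{top}}}h}$ (as observed in the proof of \propref{prop:leprop}). Since $F$ is finite with $F^{-1}(\0)\cap\W_i=\{p_i\}$, comparing supports forces $\dim_{p_i}\Sigma_{{}_{\operatorname{top}}}(h\circ F)\le\dim_\0\overline{\Sigma_{{}_{\operatorname{top}}}h}=s\le n-2$; in particular $h\circ F\not\equiv 0$ near $p_i$, so $M_{h\circ F,p_i}$ really is a Milnor fiber. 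As $\W_i$ is smooth, $\Z_{\W_i}^\bullet[n]$ is perverse, so $\phi_{h\circ F}[-1]\Z_{\W_i}^\bullet[n]$ is a perverse sheaf whose support near $p_i$ has dimension $\le n-2$; the argument of \propref{prop:leprop} then yields $\widetilde H^j(M_{h\circ F,p_i};\Z)=0$ for all $j\le 0$, as needed.

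Feeding these vanishings into the long exact sequence of \thmref{thm:les} finishes the first two assertions. For each $j\le -1$, the term $\widetilde\hyp^{j}(D\cap M_{h,\0};\Ndot[-n+1])$ is wedged between $\bigoplus_i\widetilde H^j(M_{h\circ F,p_i};\Z)=0$ and $\widetilde H^{j+1}(M_{h,\0};\Z)=0$, hence vanishes; in particular $\widetilde\hyp^{-1}(D\cap M_{h,\0};\Ndot[-n+1])=0$. In degree $0$, the exact segment $0=\bigoplus_i\widetilde H^0(M_{h\circ F,p_i};\Z)\to\widetilde\hyp^0(D\cap M_{h,\0};\Ndot[-n+1])\to\widetilde H^1(M_{h,\0};\Z)$ exhibits $\widetilde\hyp^0(D\cap M_{h,\0};\Ndot[-n+1])$ as a subgroup of $\widetilde H^1(M_{h,\0};\Z)$; that group is $0$ if $n-1-s>1$ and is the free group $\widetilde H^{n-1-s}(M_{h,\0};\Z)$ if $n-1-s=1$, so it is free Abelian, and hence so is its finitely generated subgroup $\widetilde\hyp^0(D\cap M_{h,\0};\Ndot[-n+1])$. (Alternatively, freeness can be obtained by copying the $\Z/p\Z$-coefficient argument from the proof of \propref{prop:leprop}: $\Ndot$ has torsion-free stalks, so $\Ndot\otimes^L\Z/p\Z$ is again perverse and $\phi_h[-1]$ of it is perverse with support inside $\overline{\Sigma_{{}_{\operatorname{top}}}h}$, whence the Universal Coefficient Theorem removes all $p$-torsion from $H^0(\phi_h\Ndot[-n+1])_\0$.)

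For the final assertion, \thmref{thm:kprop}(3) shows that $\Ndot[-n+1]$ is quasi-isomorphic to the single sheaf $\mathcal H^{-n+1}(\Ndot)$, with torsion-free stalks $\Z^{m(x)-1}$, placed in degree $0$; hence $\hyp^k(D\cap M_{h,\0};\Ndot[-n+1])=H^k(D\cap M_{h,\0};\mathcal H^{-n+1}(\Ndot))$ vanishes for $k\le -1$, and for $k=0$ it is a group of global sections of a sheaf with torsion-free stalks, hence torsion-free. Inserting $\widetilde\hyp^k=0=\hyp^k$ for $k\le -1$ and $\widetilde\hyp^k=\hyp^k$ for $k\ge 1$ (the first bullet of \defref{def:reduced}) into the unconditional Euler-characteristic identity (the second bullet of \defref{def:reduced}) collapses it to $\operatorname{rank}\widetilde\hyp^0(D\cap M_{h,\0};\Ndot[-n+1])=\operatorname{rank}\hyp^0(D\cap M_{h,\0};\Ndot[-n+1])-(r-1)$; since both groups are now known to be finitely generated and free, this gives $\hyp^0(D\cap M_{h,\0};\Ndot[-n+1])\cong\widetilde\hyp^0(D\cap M_{h,\0};\Ndot[-n+1])\oplus\Z^{r-1}$, i.e., the removal of $r-1$ direct summands $\Z$. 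The one place I expect real work is the support estimate $\dim_{p_i}\Sigma_{{}_{\operatorname{top}}}(h\circ F)\le n-2$ in the second paragraph — equivalently, the non-emptiness and connectedness of each $M_{h\circ F,p_i}$ — which rests on recognizing $\phi_h[-1]F_*\Z_\W^\bullet[n]=F_*\phi_{h\circ F}[-1]\Z_\W^\bullet[n]$ as a perverse quotient of $\phi_h[-1]\Z_X^\bullet[n]$; everything else is diagram chasing and the routine observation that $\Ndot[-n+1]$ may be treated as an honest sheaf.
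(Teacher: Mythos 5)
Your proof is correct, and its skeleton --- feed the vanishing from \propref{prop:leprop} into the long exact sequence of \thmref{thm:les} --- is the same as the paper's, but two of your sub-arguments differ genuinely. First, to kill the terms $\bigoplus_i\widetilde H^{j}(M_{h\circ F,p_i};\Z)$ for $j\leq 0$ you prove the stronger statement that each $M_{h\circ F,p_i}$ is non-empty \emph{and connected}, by observing that $F_*\phi_{h\circ F}[-1]\Z_\W^\bullet[n]$ is a perverse quotient of $\phi_h[-1]\Z_X^\bullet[n]$ and therefore inherits the support bound $\leq n-2$. That support estimate is valid (the analogous statement for isolated $\Sigma_{{}_{\operatorname{top}}}h$ appears in \thmref{thm:isol}), but the paper gets by with much less: it only uses that $\widetilde H^{-1}(M_{h\circ F,p_i};\Z)=0$ (non-emptiness, since $h$ cannot vanish identically on a component of $X$ under the hypothesis), that $\widetilde H^{0}$ of any space is free Abelian, and that $H^{1}(M_{h,\0};\Z)$ is free by Universal Coefficients; $\widetilde\hyp^{0}$ is then an extension of a subgroup of a free group by a free group. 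Second, for the ``removing $r-1$ summands'' claim the paper simply notes that, once $\widetilde\hyp^{-1}=0$, the four-term exact sequence of \lemref{lem:reduced}(3) becomes $0\rightarrow\Z^{r-1}\rightarrow\hyp^{0}\rightarrow\widetilde\hyp^{0}\rightarrow 0$, which splits because $\widetilde\hyp^{0}$ is free; you instead extract the rank relation from the Euler-characteristic bullet of \defref{def:reduced} and separately verify that $\hyp^{0}$ is torsion-free by treating $\Ndot[-n+1]$ as an honest sheaf with torsion-free stalks. Both routes are sound; the paper's is shorter and produces the splitting directly from an exact sequence already on hand, while yours buys the extra geometric facts that the $M_{h\circ F,p_i}$ are connected and that $\hyp^{0}(D\cap M_{h,\0};\Ndot[-n+1])$ is free Abelian.
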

\begin{proof} Note that $\dim_\0\Sigma_{{}_{\operatorname{top}}}h\leq n-2$ implies that $\dim_\0 V(h)<n$. Now, since $\dim_\0 V(h)<n$, $\bigoplus_i\widetilde H^{-1}(M_{h\circ F, p_i};\Z)=0$, and part of the long exact sequence from the theorem is

\medskip

$0\rightarrow  \widetilde\hyp^{-1}(D\cap M_{h,\0}; \Ndot[-n+1])\rightarrow\widetilde H^{0}(M_{h, \0};\Z)\rightarrow\hfill$

\medskip

$\hfill\bigoplus_i\widetilde H^{0}(M_{h\circ F, p_i};\Z)\rightarrow \widetilde\hyp^{0}(D\cap M_{h,\0}; \Ndot[-n+1])\rightarrow H^{1}(M_{h, \0};\Z)\rightarrow\cdots .$

\medskip

Each $\widetilde H^{0}(M_{h\circ F, p_i};\Z)$ is free Abelian and the Universal Coefficient Theorem for cohomology  tells us that $H^{1}(M_{h, \0};\Z)$ is free Abelian.

Since $\dim_\0\Sigma_{{}_{\operatorname{top}}}h\leq n-2$, \propref{prop:leprop} tells us that $\widetilde H^{0}(M_{h, \0};\Z)=0$, and we immediately conclude that $$ \widetilde\hyp^{-1}(D\cap M_{h,\0}; \Ndot[-n+1])=0$$
 and $\widetilde\hyp^{0}(D\cap M_{h,\0}; \Ndot[-n+1])$ is free Abelian. The final conclusion follows now from the splitting of the exact sequence in Item 3 of \lemref{lem:reduced}.
\end{proof}

\bigskip

\begin{cor}\label{cor:corles} If $S\cap\Sigma(h\circ F)=\emptyset$, then there is an isomorphism 
$$\widetilde H^{j}(M_{h, \0};\Z)\cong\widetilde\hyp^{j-1}(D\cap M_{h,\0}; \Ndot[-n+1])$$ 
and this isomorphism commutes with the Milnor monodromies.
\end{cor}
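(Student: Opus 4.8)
The plan is to derive Corollary~\ref{cor:corles} directly from the long exact sequence of Theorem~\ref{thm:les}, so the entire argument hinges on showing that the hypothesis $S\cap\Sigma(h\circ F)=\emptyset$ forces the middle terms $\bigoplus_i\widetilde H^j(M_{h\circ F,p_i};\Z)$ of that sequence to vanish in every degree. First I would unwind what $S\cap\Sigma(h\circ F)=\emptyset$ means: since $\W$ is smooth (being an open subset of a union of copies of $\C^n$), the function $h\circ F$ on $\W$ has an ordinary critical locus, and the assumption says none of the $p_i$ lie on it. Hence for each $i$, $h\circ F$ is a submersion at $p_i$ (or $h\circ F$ is identically a nonzero constant near $p_i$, which cannot happen since $h(\0)=0$ and $F(p_i)=\0$, so in fact it is a genuine submersion), and therefore its Milnor fiber $M_{h\circ F,p_i}$ at $p_i$ is contractible. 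Consequently $\widetilde H^j(M_{h\circ F,p_i};\Z)=0$ for all $j$ and all $i$.

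Feeding this into the long exact sequence of Theorem~\ref{thm:les}, every third term vanishes, so the connecting maps give isomorphisms
$$
\widetilde\hyp^{j-1}(D\cap M_{h,\0};\Ndot[-n+1])\ \xrightarrow{\ \cong\ }\ \widetilde H^{j}(M_{h,\0};\Z)
$$
for every $j$. Since Theorem~\ref{thm:les} also asserts that the long exact sequence is compatible with the Milnor monodromy automorphisms in each degree, these isomorphisms automatically commute with the respective Milnor monodromies, which is the second claim of the corollary. This essentially completes the proof; it is a formal consequence of the theorem once the vanishing of the $M_{h\circ F,p_i}$ cohomology is established.

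The one point that requires genuine care — and which I expect to be the main (small) obstacle — is the claim that $S\cap\Sigma(h\circ F)=\emptyset$ really implies each $M_{h\circ F,p_i}$ is contractible, rather than merely having trivial \emph{reduced} cohomology in positive degrees. The subtlety is the degenerate case flagged in the Convention of Section~2: one must make sure that $h\circ F$ is not identically zero on a component $\W_i$, for then the ``Milnor fiber'' would be empty and $\widetilde H^{-1}$ would be $\Z$, not $0$. But $h\circ F$ identically zero on $\W_i$ would force $\0=F(p_i)\in V(h)$ with $X$ locally contained in $V(h\circ F)$'s image — more precisely it would force $h$ to vanish on a neighborhood of $\0$ in the (at least $n$-dimensional, actually exactly $n$-dimensional) image, contradicting that $h$ is a complex analytic function with $V(h)$ a proper hypersurface in $X$; alternatively, if $h\equiv 0$ on $X$ the whole setup is vacuous. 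So in the relevant situation $h\circ F$ is a nonconstant analytic function, submersive at $p_i$ because $p_i\notin\Sigma(h\circ F)$ and $\W_i$ is smooth, whence $M_{h\circ F,p_i}$ is an honest nonempty contractible Milnor fiber of a submersion and all its reduced cohomology — in every degree, including degree $-1$ — vanishes. With that nailed down, the long exact sequence does the rest.

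\begin{proof}
Since $S\cap\Sigma(h\circ F)=\emptyset$ and $\W$ is smooth, for each $i$ the analytic function $h\circ F$ is a submersion at $p_i$; in particular $h\circ F$ is not identically zero near $p_i$, so the Milnor fiber $M_{h\circ F,p_i}$ is a nonempty contractible set. Hence $\widetilde H^j(M_{h\circ F,p_i};\Z)=0$ for all $j$ and all $i$, and therefore $\bigoplus_i\widetilde H^{j}(M_{h\circ F, p_i};\Z)=0$ for all $j$. Substituting this into the long exact sequence of \thmref{thm:les}, the connecting homomorphisms become isomorphisms
$$
\widetilde\hyp^{j-1}(D\cap M_{h,\0}; \Ndot[-n+1])\ \xrightarrow{\ \cong\ }\ \widetilde H^{j}(M_{h, \0};\Z)
$$
for every $j$. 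Since the long exact sequence of \thmref{thm:les} is compatible with the Milnor monodromy automorphisms in each degree, these isomorphisms commute with the respective Milnor monodromies.
\end{proof}
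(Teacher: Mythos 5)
Your proof is correct and matches the paper's (implicit) argument: the corollary is stated as an immediate consequence of Theorem~\ref{thm:les}, obtained exactly as you do by observing that $p_i\notin\Sigma(h\circ F)$ forces $h\circ F$ to be a submersion at $p_i$ on the smooth space $\W$ (in particular not identically zero there, so no empty-Milnor-fiber degeneracy), whence every third term of the long exact sequence vanishes. The only quibble is terminological: the resulting isomorphisms $\widetilde\hyp^{j-1}\to\widetilde H^{j}$ are induced by the inclusion $\Ndot\to\Z_X^\bullet[n]$ rather than being the connecting homomorphisms, but this does not affect the argument.
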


\bigskip

\begin{exm}\label{ex:unfold} Suppose that we have a finite map $f:(\V, S)\rightarrow (\Omega, \0)$, where $\V$ and $\Omega$ are open neighborhoods of $S$ in $\C^d$ and of the origin in $\C^{d+1}$, respectively. Suppose that $\T$ is an open neighborhood of the origin in $\C^d$, and that $F:\T\times\V\rightarrow \T\times\Omega$ is an unfolding of $f=f_{\0}$, i.e., $F$ is a finite analytic map of the form $F(\mbf t, \mbf v) \ = \ (\mbf t, f_{\mbf t}(\mbf v))$, where, for each $\mbf t\in\T$, $f_{\mbf t}$ is a finite map from $\V$ to $\Omega$.

Let $X$ denote the image of $F$, continue to write $F$ for the map from $\T\times\V$ to $X$, and let $h$ be the projection onto the first coordinate; thus, $(h\circ F)(t_1,\dots, t_d, \mbf v)=t_1$. Then, $S \cap \Sigma(h\circ F) = \emptyset$ and so $\widetilde H^{j}(M_{h, \0};\Z)$ is isomorphic to $\widetilde \hyp^{j-1}(D\cap M_{h,\0}; \Ndot[-n+1])$ by an isomorphism which commutes with the Milnor monodromies.

\end{exm}

\bigskip

Before we can prove the next corollary, we need to recall a lemma, which is well-known to experts in the field. See, for instance, \cite{dimcasheaves}, Theorem 4.1.22 (note that the setting of  \cite{dimcasheaves}, Theorem 4.1.22, is algebraic, but that assumption is used in the proof only to guarantee that there are a finite number of strata).

\begin{lem}\label{lem:addmulteuler}Let $\strat$ be a complex analytic Whitney stratification, with connected strata, of a complex analytic space $Y$. Suppose that $\strat$ contains a finite number of strata. Let $\Adot$ be a bounded complex of $\Z$-modules which is constructible with respect to $\strat$. For each stratum $S$, let $p_S$ denote a point in $S$.

Then, there is the following additivity/multiplicativity formula for the Euler characteristics:
$$
\chi\left(\hyp^*(Y; \Adot)\right)=\sum_S\chi(S)\chi(\Adot)_{p_S}.
$$
\end{lem}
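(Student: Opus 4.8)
The plan is to prove first the compactly supported analogue
$$\chi_c\big(\hyp^*_c(Y;\Adot)\big)=\sum_S\chi_c(S)\,\chi(\Adot)_{p_S}$$
by d\'evissage on the number $m$ of strata of $\strat$, and then to recover the stated identity from the fact --- standard for complex analytic spaces with constructible coefficients --- that $\chi_c=\chi$. For $m=1$, $Y$ is a single connected complex manifold, every cohomology sheaf $H^q(\Adot)$ is a $\Z$--local system, and the compactly supported hypercohomology spectral sequence $E_2^{p,q}=\hyp^p_c(Y;H^q(\Adot))\Rightarrow\hyp^{p+q}_c(Y;\Adot)$ reduces the claim to the classical fact that a $\Z$--local system $L$ on a connected manifold $Y$ satisfies $\chi_c\big(\hyp^*_c(Y;L)\big)=\chi_c(Y)\operatorname{rank}(L)$; this follows, for instance, from Poincar\'e--Verdier duality together with the analogous statement for ordinary cohomology, where twisting the coefficients by $L$ changes only the differentials of a finite cochain complex, not the ranks of its terms.

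For the inductive step I would pick a stratum $S_0$ that is \emph{open} in $Y$ --- for instance a stratum maximal for the order ``$S\le S'$ if and only if $S\subseteq\overline{S'}$'', so that by the frontier condition $S_0$ lies in the closure of no other stratum, whence $Y\setminus S_0=\bigcup_{S\neq S_0}\overline S$ is closed. Writing $j\colon S_0\hookrightarrow Y$ for the open inclusion and $i\colon Z:=Y\setminus S_0\hookrightarrow Y$ for the closed inclusion of the complement --- a complex analytic space Whitney-stratified by the remaining $m-1$ strata, on which $i^*\Adot$ is again bounded and constructible --- I would apply $\hyp^*_c(Y;-)$ to the distinguished triangle $j_!j^*\Adot\to\Adot\to i_*i^*\Adot\xrightarrow{[1]}$. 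Using the standard identifications $\hyp^*_c(Y;j_!j^*\Adot)\cong\hyp^*_c(S_0;\Adot|_{S_0})$ (compactly supported hypercohomology does not see extension by zero along an open inclusion) and $\hyp^*_c(Y;i_*i^*\Adot)\cong\hyp^*_c(Z;i^*\Adot)$ ($i$ being closed), the resulting long exact sequence yields
$$\chi_c\big(\hyp^*_c(Y;\Adot)\big)=\chi_c\big(\hyp^*_c(S_0;\Adot|_{S_0})\big)+\chi_c\big(\hyp^*_c(Z;i^*\Adot)\big).$$
The first summand is $\chi_c(S_0)\,\chi(\Adot)_{p_{S_0}}$ by the base case applied to the one-stratum space $S_0$ and the complex $\Adot|_{S_0}$ (whose cohomology sheaves are local systems), and the second is $\sum_{S\neq S_0}\chi_c(S)\,\chi(\Adot)_{p_S}$ by the inductive hypothesis applied to $(Z,i^*\Adot)$; adding the two completes the induction.

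The step I expect to require the most care is the return to ordinary Euler characteristics, i.e.\ the identity $\chi_c=\chi$: this is exactly where the complex analytic hypothesis is genuinely used, since the corresponding additivity is false for general real stratified spaces --- already $\chi_c\big((0,1)\big)=-1\neq 1=\chi\big((0,1)\big)$ --- but holds for complex analytic spaces with constructible coefficients, all of whose strata are complex manifolds of even real dimension. Finally, the hypothesis that $\strat$ has only finitely many strata is precisely what makes the d\'evissage terminate and keeps every (hyper)cohomology group finitely generated, so that all the Euler characteristics written above are defined.
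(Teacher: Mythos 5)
The paper does not actually prove this lemma; it simply cites \cite{dimcasheaves}, Theorem~4.1.22, so your write-up supplies the argument behind that citation rather than paralleling a proof in the text. Your d\'evissage is exactly the standard one: peel off an open (maximal) stratum via the triangle $j_!j^*\Adot\to\Adot\to i_*i^*\Adot$ to get additivity of $\chi_c$ over strata, handle a single stratum through the compactly supported hypercohomology spectral sequence together with multiplicativity of $\chi_c$ for local systems on a connected manifold, and then convert $\chi_c$ into $\chi$. All of these steps are correct, the identification of the open stratum via maximality in the frontier order is right, and you correctly isolate $\chi_c=\chi$ as the one point where the complex-analytic (even real dimension) structure is genuinely used. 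The only inaccuracy is in your closing sentence: finiteness of the stratification does \emph{not} by itself guarantee that the (compactly supported) hypercohomology groups are finitely generated --- a connected open Riemann surface of infinite genus is a single stratum whose $H^1$ has infinite rank --- so some tameness hypothesis is implicitly needed (e.g.\ $Y$ compact or algebraic, or, as in the paper's application to $D\cap M_{h,\0}$, homeomorphic to the interior of a compact stratified set with collared boundary; this is also what makes $\chi_c=\chi$ safe to invoke). That is a gap in the lemma as stated (automatic in the algebraic setting of \cite{dimcasheaves}) rather than in your argument, but the finiteness should not be attributed to the number of strata alone.
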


\bigskip

\begin{cor}\label{cor:eulermulti} The relationship between the reduced Euler characteristics of the Milnor fiber of $h$ at $\0$, the Milnor fibers of $h\circ F$, and the $X_k$'s  is given by
$$
\widetilde\chi(M_{h,\0})= r-1 + \sum_i \widetilde\chi(M_{h\circ F,p_i}) - \sum_{k \geq 2} (k-1) \chi (X_k \cap M_{h,\0}) .
$$
\end{cor}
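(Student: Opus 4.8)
The plan is to extract this Euler-characteristic identity directly from the long exact sequence of \thmref{thm:les}, combined with the additivity/multiplicativity formula of \lemref{lem:addmulteuler}. First I would take Euler characteristics in the long exact sequence
$$
\cdots\rightarrow \widetilde\hyp^{j-1}(D\cap M_{h,\0}; \Ndot[-n+1])\rightarrow \widetilde H^{j}(M_{h, \0};\Z)\rightarrow\bigoplus_i\widetilde H^{j}(M_{h\circ F, p_i};\Z)\rightarrow \widetilde\hyp^{j}(D\cap M_{h,\0}; \Ndot[-n+1])\rightarrow\cdots,
$$
which, since the alternating sum over a long exact sequence vanishes, yields
$$
\widetilde\chi(M_{h,\0}) \;=\; \sum_i \widetilde\chi(M_{h\circ F,p_i}) \;+\; \chi\!\left(\widetilde\hyp^{*}(D\cap M_{h,\0}; \Ndot[-n+1])\right).
$$
By the definition of the $(r-1)$-reduced hypercohomology in \defref{def:reduced}, the right-most term equals $\chi\!\left(\hyp^{*}(D\cap M_{h,\0}; \Ndot[-n+1])\right) - (r-1)$. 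So it remains to identify $\chi\!\left(\hyp^{*}(D\cap M_{h,\0}; \Ndot[-n+1])\right)$ with $-\sum_{k\geq 2}(k-1)\chi(X_k\cap M_{h,\0})$.

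For that final identification I would apply \lemref{lem:addmulteuler} with $Y = D\cap M_{h,\0}$, $\Adot = \Ndot[-n+1]$, and a Whitney stratification of $D\cap M_{h,\0}$ refining the partition into the sets $X_k\cap M_{h,\0}$ for $k\geq 2$ — such a stratification exists because $F$ can be stratified so that each $\overline{X_k}$ is a union of strata (as recalled in the introduction), and the Milnor fiber $M_{h,\0}$ can be chosen transverse to all the strata, so the intersections $X_k\cap M_{h,\0}$ are themselves unions of strata of the induced stratification. On a stratum contained in $X_k$, \thmref{thm:kprop}(3) tells us the stalk cohomology of $\Ndot$ is $\Z^{m(x)-1}=\Z^{k-1}$ concentrated in degree $-n+1$, so the stalk cohomology of $\Ndot[-n+1]$ is $\Z^{k-1}$ in degree $0$, giving $\chi(\Ndot[-n+1])_{p_S} = k-1$. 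Since $\Ndot$ is supported on $D$ with zero stalks off $\bigcup_{k\geq 2}X_k$ on the open dense part, the strata contributing to $\sum_S \chi(S)\chi(\Adot)_{p_S}$ are exactly those inside some $X_k$ with $k\geq 2$, and summing $\chi(S)$ over the strata making up $X_k\cap M_{h,\0}$ recovers $\chi(X_k\cap M_{h,\0})$ by additivity of the Euler characteristic for constructible sets. Hence
$$
\chi\!\left(\hyp^{*}(D\cap M_{h,\0}; \Ndot[-n+1])\right) \;=\; \sum_{k\geq 2}(k-1)\,\chi(X_k\cap M_{h,\0}),
$$
but one must be careful about the sign convention for $[-n+1]$: shifting a complex changes the Euler characteristic by $(-1)^{n-1}$ if one uses the degree-weighted alternating sum, whereas \lemref{lem:addmulteuler} is stated so that $\chi(\Adot)_{p_S}$ already absorbs the shift. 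I would double-check that the signs in \defref{def:reduced} (where the Euler characteristic of $\widetilde\hyp^{*}$ differs from that of $\hyp^{*}$ by exactly $-(r-1)$, with no sign on $r-1$) are consistent with the statement, and conclude
$$
\widetilde\chi(M_{h,\0})\;=\; (r-1) + \sum_i \widetilde\chi(M_{h\circ F,p_i}) - \sum_{k\geq 2}(k-1)\,\chi(X_k\cap M_{h,\0}).
$$

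The main obstacle I expect is purely bookkeeping: getting the signs and shifts straight between (i) the convention in \lemref{lem:addmulteuler} for how a shifted complex contributes to the stratum-wise Euler characteristic, (ii) the $[-n+1]$ shift applied to $\Ndot$, and (iii) the convention in \defref{def:reduced} and the Convention on $\widetilde H^{*}(\emptyset)$. There is no deep content — the cohomology sheaf of $\Ndot$ sits in a single degree, so $\chi$ is just $\pm(m(x)-1)$ at each point — but I would want to verify that the net sign on the $\sum_{k\geq 2}(k-1)\chi(X_k\cap M_{h,\0})$ term comes out negative rather than positive, which is the only thing that could go wrong. Everything else is a formal consequence of exactness of the sequence in \thmref{thm:les} and the additivity formula.
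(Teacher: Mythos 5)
Your strategy is exactly the paper's: take Euler characteristics in the long exact sequence of \thmref{thm:les}, peel off the $r-1$ via \defref{def:reduced}, and compute $\chi\left(\hyp^{*}(D\cap M_{h,\0};\Ndot[-n+1])\right)$ by applying \lemref{lem:addmulteuler} to a stratification adapted to the $X_k$'s. The stratification step and the stalk computation ($\chi(\Ndot[-n+1])_x=k-1$ on $X_k$, with no residual sign from the shift) are correct as you first state them; your later second-guessing about a factor of $(-1)^{n-1}$ is unfounded, since the stalk of $\Ndot[-n+1]$ sits in degree $0$.

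The genuine error is in your very first step: reading the alternating sum off the long exact sequence gives
$$
\widetilde\chi(M_{h,\0})\;=\;\sum_i\widetilde\chi(M_{h\circ F,p_i})\;-\;\chi\left(\widetilde\hyp^{*}(D\cap M_{h,\0};\Ndot[-n+1])\right),
$$
with a \emph{minus} sign, not the plus sign you wrote. The sign comes from the degree offset: the term preceding $\widetilde H^{j}(M_{h,\0};\Z)$ is $\widetilde\hyp^{j-1}$, not $\widetilde\hyp^{j}$; equivalently, the sequence arises from the short exact sequence $0\to\phi_h[-1]\Ndot\to\phi_h[-1]\Z_X[n]\to\phi_h[-1]F_*\Z_\W^\bullet[n]\to 0$ in which $\phi_h[-1]\Z_X[n]$ is the \emph{middle} term, so its Euler characteristic is the \emph{sum} of the other two. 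With your plus sign, substituting $\chi(\widetilde\hyp^{*})=\chi(\hyp^{*})-(r-1)$ and $\chi(\hyp^{*})=\sum_{k\geq2}(k-1)\chi(X_k\cap M_{h,\0})$ produces $\widetilde\chi(M_{h,\0})=-(r-1)+\sum_i\widetilde\chi(M_{h\circ F,p_i})+\sum_{k\geq2}(k-1)\chi(X_k\cap M_{h,\0})$, which has \emph{both} signs wrong; and your proposed repair (hoping $\chi(\hyp^{*})$ comes out negative) would fix neither the sign of $r-1$ nor match your own correct stratification computation, which gives the positive sum. Once the minus sign in the long exact sequence is corrected, everything you wrote closes up without further adjustment and reproduces the paper's argument.
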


\begin{proof}
Via additivity of the Euler characteristic in the hypercohomology long exact sequence given in \thmref{thm:les}, we obtain the following relation:
\begin{align*}
\widetilde \chi (M_{h,\0})  &= \sum_i \widetilde \chi \left (M_{h \circ F,p_i} \right ) -\chi \left (\widetilde \hyp^{*}(D \cap M_{h,\0}; \Ndot[-n+1]) \right ) \\
&= r-1 +  \sum_i \widetilde \chi \left (M_{h \circ F,p_i} \right ) - \chi \left ( \hyp^{*}(D \cap M_{h,\0}; \Ndot[-n+1]) \right ).
\end{align*}
 We are then finished, provided that we show that 
$$\chi(D\cap M_{h,\0}; \Ndot[-n+1]) \ = \ \sum_{k\geq 2} (k-1)\chi(X_k\cap M_{h,\0}).$$

For this, we use \lemref{lem:addmulteuler}.  Take a complex analytic Whitney stratification $\strat'$ of $D$ such that $\Ndot_{|_D}$ is constructible with respect to $\strat'$; hence, for each $k$, $D\cap X_k$ is a union of strata. As $M_{h,\0}$ transversely intersects these strata, there is an induced Whitney stratification $\strat=\{S\}$ on $D\cap M_{h,\0}$ and also on each $D\cap X_k\cap M_{h,\0}$; these stratifications have a finite number of strata, since the Milnor fiber is defined inside a small ball and $\strat'$ is locally finite. 

Now, since the Euler characteristic of the stalk cohomology of $\Ndot[-n+1]$ at a point $x\in X_k$ is $(k-1)$, \lemref{lem:addmulteuler} yields
$$
\chi(D\cap M_{h,\0}; \Ndot[-n+1])=\sum_k\sum_{S\subseteq D\cap X_k\cap M_{h,\0}}(k-1)\chi(S).
$$
 Finally, , we ``put back together'' the Euler characteristics of the $X_k$'s, i.e., 
$$
\chi(X_k\cap M_{h,\0})=\sum_{S\subseteq D\cap X_k\cap M_{h,\0}}\chi(S),
$$
by again applying \lemref{lem:addmulteuler} to the constant sheaf on $X_k\cap M_{h,\0}$.
\end{proof}

\bigskip

\section{The Isolated Critical Point Case}

The case where $\0$ is an isolated point in $\Sigma_{{}_{\operatorname{top}}}h$ is of particular interest.

\medskip

\begin{thm}\label{thm:isol}
Suppose that $\0$ is an isolated point in $\Sigma_{{}_{\operatorname{top}}}h$. Then, for all $p_i \in S$, $\dim_{p_i}\Sigma(h\circ F) \leq 0$,  $\widetilde \hyp^{*}(D\cap M_{h,\0}; \Ndot[-n+1])$ is non-zero in (at most) one degree, degree $n-2$, where it is free Abelian, and the reduced, integral cohomology of $M_{h, \0}$ is non-zero in, at most, one degree, degree $n-1$, where it is free Abelian of rank
\begin{align*}
\mu_\0(h)  &=  \rank \widetilde \hyp^{n-2}(D\cap M_{h,\0}; \Ndot[-n+1])+\sum_i \mu_{p_i}(h\circ F)  \\
&= (-1)^{n-1}\Big[(r-1) -\sum_{k\geq 2} (k-1)\chi(X_k\cap M_{h,\0})\Big]+ \sum_i \mu_{p_i}(h\circ F).
\end{align*}

\medskip

In particular, if $\0$ is an isolated point in $\Sigma_{{}_{\operatorname{top}}}h$ and $S \cap \Sigma(h\circ F) = \emptyset$, then
$$
\mu_\0(h) = \rank \widetilde \hyp^{n-2}(D\cap M_{h,\0}; \Ndot[-n+1])= (-1)^{n-1}\Big[(r-1) -\sum_{k\geq 2} (k-1)\chi(X_k\cap M_{h,\0})\Big].
$$
\end{thm}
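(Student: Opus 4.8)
The plan is to deduce this theorem directly from \thmref{thm:les} (and its corollaries) together with \propref{prop:leprop} applied in two places: once to $h$ on $X$, and once to each $h\circ F$ on $\W$. First I would note that, by \remref{rem:unstablelocus}-type reasoning — or more directly, since $\Sigma_{{}_{\operatorname{top}}}(h\circ F)$ pushes into $\Sigma_{{}_{\operatorname{top}}}h$ under the finite map $F$, and $F^{-1}(\0)=S$ — the hypothesis that $\0$ is isolated in $\Sigma_{{}_{\operatorname{top}}}h$ forces $\dim_{p_i}\Sigma(h\circ F)\le 0$ for each $p_i\in S$. Hence $h\circ F$ has an isolated critical point (in the topological sense) at each $p_i$, and since $\W$ is smooth of dimension $n$, the classical Milnor result (equivalently, \propref{prop:leprop} with $X$ replaced by the smooth $\W$ and $s=0$) gives that $\widetilde H^*(M_{h\circ F,p_i};\Z)$ is concentrated in degree $n-1$ and free Abelian, with rank $\mu_{p_i}(h\circ F)$.

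Next I would feed these vanishing statements into the long exact sequence of \thmref{thm:les}. Since $\0$ is isolated in $\Sigma_{{}_{\operatorname{top}}}h$, \propref{prop:leprop} (with $s=0$) tells us $\widetilde H^j(M_{h,\0};\Z)=0$ for $j\ne n-1$ and $\widetilde H^{n-1}(M_{h,\0};\Z)$ free Abelian. Likewise $\bigoplus_i\widetilde H^j(M_{h\circ F,p_i};\Z)=0$ for $j\ne n-1$. Plugging $j\ne n-2, n-1$ into the long exact sequence immediately gives $\widetilde\hyp^{j-1}(D\cap M_{h,\0};\Ndot[-n+1])=0$, i.e. the $(r-1)$-reduced hypercohomology is concentrated in degree $n-2$; and the segment around $j=n-1$ reads
$$
0\to\widetilde\hyp^{n-2}(D\cap M_{h,\0};\Ndot[-n+1])\to\widetilde H^{n-1}(M_{h,\0};\Z)\to\bigoplus_i\widetilde H^{n-1}(M_{h\circ F,p_i};\Z)\to 0,
$$
where the left term vanishes in one slot up and the right term vanishes one slot down (using $\dim_\0 V(h)\le$ something — actually here one must be a little careful: the argument needs $\widetilde H^{n}(M_{h,\0};\Z)=0$, which holds since the Milnor fiber of $h$ has real dimension $2n-2$ hence cohomological dimension $\le n-1$, and similarly for the $h\circ F$ term on the other end). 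Since $\bigoplus_i\widetilde H^{n-1}(M_{h\circ F,p_i};\Z)$ is free, this short exact sequence splits, giving both that $\widetilde\hyp^{n-2}(D\cap M_{h,\0};\Ndot[-n+1])$ is free Abelian (a subgroup of a free Abelian group) and the rank identity $\mu_\0(h)=\rank\widetilde\hyp^{n-2}(D\cap M_{h,\0};\Ndot[-n+1])+\sum_i\mu_{p_i}(h\circ F)$.

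For the second equality — the explicit formula for $\rank\widetilde\hyp^{n-2}$ — I would compute the Euler characteristic of the $(r-1)$-reduced hypercohomology two ways. Since it is concentrated in degree $n-2$, its Euler characteristic is $(-1)^{n-2}\rank\widetilde\hyp^{n-2}=(-1)^n\rank\widetilde\hyp^{n-2}$. On the other hand, by the Euler-characteristic computation inside the proof of \corref{cor:eulermulti}, $\chi(\widetilde\hyp^*(D\cap M_{h,\0};\Ndot[-n+1]))=-(r-1)+\sum_{k\ge2}(k-1)\chi(X_k\cap M_{h,\0})$. Equating the two and rearranging yields $\rank\widetilde\hyp^{n-2}=(-1)^{n-1}\big[(r-1)-\sum_{k\ge2}(k-1)\chi(X_k\cap M_{h,\0})\big]$, which combined with the previous display gives the stated formula for $\mu_\0(h)$. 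The special case $S\cap\Sigma(h\circ F)=\emptyset$ then follows because all $\mu_{p_i}(h\circ F)=0$, or directly from \corref{cor:corles}.

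**Main obstacle.** The only genuinely delicate point is the bookkeeping at the two ends of the long exact sequence: one must verify that the relevant cohomology groups in degrees $n-2$ and $n$ vanish (for $M_{h,\0}$, $M_{h\circ F,p_i}$, and the reduced hypercohomology of the $\Ndot$-term) so that the sequence collapses to the displayed short exact sequence, and one must invoke freeness (via the Universal Coefficient argument already used in \propref{prop:leprop}) to conclude that this sequence splits. Establishing $\dim_{p_i}\Sigma(h\circ F)\le 0$ from the hypothesis on $h$ — i.e. that topological critical points of $h\circ F$ lying over $\0$ are isolated — also requires a brief argument using finiteness of $F$ and $F^{-1}(\0)=S$, but this is routine once stated.
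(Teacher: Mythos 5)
Your overall route is the same as the paper's: feed \propref{prop:leprop} (applied to $h$ on $X$ and to $h\circ F$ on the smooth $\W$) into the long exact sequence of \thmref{thm:les}, extract a short exact sequence in the single surviving degree, and obtain the closed formula from \corref{cor:eulermulti}. But there is one step where your stated justification fails: the surjectivity of $\widetilde H^{n-1}(M_{h,\0};\Z)\rightarrow\bigoplus_i\widetilde H^{n-1}(M_{h\circ F,p_i};\Z)$, equivalently the vanishing of $\widetilde\hyp^{n-1}(D\cap M_{h,\0};\Ndot[-n+1])$. In the long exact sequence the term immediately following $\bigoplus_i\widetilde H^{n-1}(M_{h\circ F,p_i};\Z)$ is $\widetilde\hyp^{n-1}(D\cap M_{h,\0};\Ndot[-n+1])$, not $\widetilde H^{n}(M_{h,\0};\Z)$; knowing $\widetilde H^{n}(M_{h,\0};\Z)=0$ only shows that $\widetilde\hyp^{n-1}$ is the \emph{cokernel} of the map you want to be onto, and the long exact sequence together with \propref{prop:leprop} cannot rule out $\widetilde\hyp^{n-1}\neq 0$ (it is a priori a quotient of the generally nonzero group $\bigoplus_i\widetilde H^{n-1}(M_{h\circ F,p_i};\Z)$). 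If $\widetilde\hyp^{n-1}$ were nonzero, both the concentration of $\widetilde\hyp^{*}$ in degree $n-2$ and your rank identity would be wrong. The missing ingredient is item (3) of \lemref{lem:reduced}: $H^{k}(\phi_h\Ndot[-n+1])_\0$ vanishes for $k>n-2$, because $\phi_h[-1]\Ndot$ is a perverse sheaf and so has no stalk cohomology in positive degrees. Citing that closes the gap; it is in effect how the paper argues, namely that all three terms of the sequence $(\dagger)$ become, after applying the exact functor $\phi_h[-1]$, perverse sheaves supported at $\{\0\}$ near $\0$ and hence have stalk cohomology concentrated in degree $0$.

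A smaller point: the assertion that $\Sigma_{{}_{\operatorname{top}}}(h\circ F)$ ``pushes into'' $\Sigma_{{}_{\operatorname{top}}}h$ is correct but does not follow merely from finiteness of $F$ and $F^{-1}(\0)=S$; on the level of Milnor fibers the cohomology of the preimages could in principle cancel. The clean argument is that $\phi_h[-1]F_*\Z_\W^\bullet[n]\cong\widehat F_*\phi_{h\circ F}[-1]\Z_\W^\bullet[n]$ is a quotient, in the Abelian category of perverse sheaves, of $\phi_h[-1]\Z_X^\bullet[n]$, and a perverse quotient has support contained in that of its source; hence near $\0$ that support is $\{\0\}$, so $\phi_{h\circ F}[-1]\Z_\W^\bullet[n]$ is supported in $S$ near $S$, giving $\dim_{p_i}\Sigma(h\circ F)\leq 0$. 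The remainder of your argument --- the splitting over the free group $\bigoplus_i\widetilde H^{n-1}(M_{h\circ F,p_i};\Z)$, freeness of $\widetilde\hyp^{n-2}$ as a finitely generated subgroup of the free group $\widetilde H^{n-1}(M_{h,\0};\Z)$, and the Euler-characteristic computation via \corref{cor:eulermulti} --- is correct and matches the paper.
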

\begin{proof} Except for the last equalities in each line, this follows from \propref{prop:leprop} and $(\ast)$ in the proof of \thmref{thm:les}, since the hypothesis is equivalent to $\0$ being an isolated point in the support of $\phi_h[-1]\Z_X[n]$, and perverse sheaves which are supported at just an isolated point have non-zero stalk cohomology in only one degree, namely degree $0$.

The final equalities in each line follow from \corref{cor:eulermulti}.
\end{proof}

\bigskip

\begin{exm} Let us return to the situation of the unfolding situation in \exref{ex:unfold}, but now suppose that $F$ is a stable unfolding of $f$ with an isolated instability. Then, as before, letting $h$ be a projection onto an unfolding coordinate, $\0$ is an isolated point in $\Sigma_{{}_{\operatorname{top}}}h$ and $S \cap \Sigma(h\circ F) = \emptyset$.

Thus, the stable fiber has the cohomology of a finite bouquet of $(n-1)$-spheres, where the number of spheres, the Milnor number, is given by 
$$\rank \widetilde \hyp^{n-2}(D\cap M_{h,\0}; \Ndot[-n+1])= (-1)^{n-1}\Big[(r-1) -\sum_{k\geq 2} (k-1)\chi(X_k\cap M_{h,\0})\Big].
$$
Note, in particular, that this implies that the right-hand side is non-negative, which is distinctly non-obvious.

\medskip
Consider the simple, but illustrative, specific example where $r = 1$, $f(u)=(u^2, u^3)$, and the stable unfolding is given by $F(t,u)=(t, u^2-t, u(u^2-t))$. Let $X$ be the image of $F$, and let $h:X\rightarrow\C$ be the projection onto the first coordinate, so that $(h\circ F)(t,u)=t$. Note that, using $(t,x,y)$ as coordinates on $\C^3$, we have $X=V(y^2-x^3-tx^2)$.

Clearly $\0\not\in\Sigma(h\circ F)$, and $\0$ is an isolated point in $\Sigma_{{}_{\operatorname{top}}}h$. For $k\geq 2$, the only $X_k$ which is not empty is $X_2$, which equals the $t$-axis minus the origin. Furthermore, $X_2\cap M_{h, \0}$ is a single point.

We conclude from \thmref{thm:isol} that $M_{h, \0}$, which is the complex link of $X$, has the cohomology of a single $1$-sphere.
\end{exm} 

\medskip

As a further application, we recover a classical formula for the Milnor number, as given in Theorem 10.5 of \cite{milnorsing}:

\begin{thm}\label{thm:milnormulti}
Suppose that $n=2$ and that $F$ is a one-parameter unfolding of a parameterization $f$ of a plane curve singularity with $r$ irreducible components at the origin. Let $t$ be the unfolding parameter and suppose that the only singularities of $M_{t_{|_X}, \0}$ are nodes, and that there are $\delta$ of them.  Recall that $X=V(g)$, and let $g_0:=g_{|_{V(t)}}$. Then, the Milnor number of $g_0$ is given by the formula:
$$
\mu_\0 \left ( g_0 \right )= 2 \delta -r + 1.
$$
\end{thm}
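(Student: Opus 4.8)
The plan is to deduce the formula by combining \thmref{thm:isol} (equivalently, for the Euler characteristic, \corref{cor:eulermulti}) with a classical conservation-of-Euler-characteristic argument relating the nearby fiber $M_{h,\0}$, where we abbreviate $h:=t_{|_X}$, to the genuine Milnor fiber of the plane-curve germ $g_0$. First I would verify that we are in the situation of \thmref{thm:isol} and of \exref{ex:unfold}: since $h$ is the projection onto the unfolding parameter, $h\circ F$ is a submersion, so $\Sigma(h\circ F)=\emptyset$ and in particular $S\cap\Sigma(h\circ F)=\emptyset$; and, for a sufficiently small representative, $V(g_0)\smallsetminus\{\0\}$ is a disjoint union of smooth branches, so the multigerm of $f$ over any point of $V(g_0)\smallsetminus\{\0\}$ is a single immersed smooth branch, which is stable. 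By the reasoning of \remref{rem:unstablelocus}, such points are not in $\Sigma_{{}_{\operatorname{top}}}h$, so $\Sigma_{{}_{\operatorname{top}}}h\subseteq\{\0\}$; hence $\0$ is an isolated point of $\Sigma_{{}_{\operatorname{top}}}h$ (the case $\Sigma_{{}_{\operatorname{top}}}h=\emptyset$ being trivial) and \thmref{thm:isol} applies.

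Next I would evaluate the multiple-point term. Because $F$ is generically one-to-one, $X_1\cap M_{h,\0}$ is dense in the curve $M_{h,\0}$, so $\bigcup_{k\geq 2}(X_k\cap M_{h,\0})$ is a finite set; at any such point distinct preimages must have distinct local branch-images (otherwise $F$ would be at least two-to-one along an entire branch), so the point is a singular point of $M_{h,\0}$. By hypothesis these singular points are exactly the $\delta$ nodes, and a node has precisely two preimages; hence $X_2\cap M_{h,\0}$ consists of the $\delta$ nodes and $X_k\cap M_{h,\0}=\emptyset$ for $k\geq 3$, so $\sum_{k\geq 2}(k-1)\chi(X_k\cap M_{h,\0})=\delta$. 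Substituting $n=2$ into \thmref{thm:isol} gives $\mu_\0(h)=-\big[(r-1)-\delta\big]=\delta-r+1$; and since the reduced cohomology of $M_{h,\0}$ is concentrated in the odd degree $1$, $\widetilde\chi(M_{h,\0})=-\mu_\0(h)=r-1-\delta$, that is, $\chi(M_{h,\0})=r-\delta$. (The same value of $\chi(M_{h,\0})$ also follows directly from \corref{cor:eulermulti}, or by normalizing $M_{h,\0}$, which is the image of $r$ disjoint disks under a map identifying two points over each node.)

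It remains to convert $\mu_\0(h)$ into the classical Milnor number $\mu_\0(g_0)$. Writing $g_a(x,y):=g(a,x,y)$, so that $M_{h,\0}=g_a^{-1}(0)\cap B_\epsilon$, the only critical points of $g_a$ lying on this set are the $\delta$ nodes, each an $A_1$-point. For $0<|a|\ll\epsilon$ the deformation $g_0\rightsquigarrow g_a$ does not change the diffeomorphism type of the Milnor fiber, and for $0<|\eta|\ll|a|$ the fiber $g_a^{-1}(\eta)\cap B_\epsilon$ is obtained from $M_{h,\0}$ by smoothing its $\delta$ nodes. Since smoothing one node replaces a local model with $\chi_{\operatorname{loc}}=1$ (two disks joined at a point) by one with $\chi_{\operatorname{loc}}=0$ (an annulus), we obtain $\chi(M_0)=\chi(M_{h,\0})-\delta=r-2\delta$, where $M_0$ is the Milnor fiber of $g_0$ at $\0$. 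As $M_0$ has the homotopy type of a bouquet of $\mu_\0(g_0)$ circles, $\chi(M_0)=1-\mu_\0(g_0)$, and therefore $\mu_\0(g_0)=2\delta-r+1$.

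I expect the main obstacle to be this last paragraph: making the comparison of fibers precise---choosing the radii $0<|\eta|\ll|a|\ll\epsilon$, checking that no critical value other than $0$ interferes with the tube over a small disk around $0$ (so that $g_a^{-1}(\eta)\cap B_\epsilon$ and $g_a^{-1}(0)\cap B_\epsilon$ agree away from small balls about the nodes), and invoking the conservation argument, in the spirit of A'Campo and L\^e, that identifies $g_a^{-1}(\eta)\cap B_\epsilon$ with the Milnor fiber of $g_0$. Everything preceding it is a direct application of the results already established in the paper.
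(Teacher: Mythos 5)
Your proof is correct, and it follows the paper's strategy for the middle of the argument --- both you and the paper reduce to \corref{cor:eulermulti}/\thmref{thm:isol} and evaluate the multiple-point term as $\sum_{k\geq 2}(k-1)\chi(X_k\cap M_{h,\0})=\delta$ by identifying $X_2\cap M_{h,\0}$ with the set of nodes, obtaining $\mu_\0(t_{|_X})=\delta-r+1$. Where you genuinely diverge is in the final step, converting this into the classical Milnor number $\mu_\0(g_0)$. The paper invokes the L\^e-cycle formula $\mu_\0(g_0)=\left(\Gamma^1_{g,t}\cdot V(t)\right)_\0+\left(\Lambda^1_{g,t}\cdot V(t)\right)_\0$, identifies the polar term with $\mu_\0(t_{|_X})$ (the number of circles in the complex link of $X$) and the L\^e-cycle term with $\delta$, and adds. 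You instead prove the same underlying relation $\mu_\0(g_0)=\mu_\0(t_{|_X})+\delta$ by an elementary conservation-of-Euler-characteristic argument: deforming $g_0$ to $g_a$, observing that $g_a^{-1}(0)\cap B_\epsilon=M_{h,\0}$ has exactly $\delta$ nodes, and that smoothing each node drops the Euler characteristic by one, so $\chi(M_0)=r-2\delta$ and $\mu_\0(g_0)=1-\chi(M_0)=2\delta-r+1$. Your route avoids the L\^e-cycle machinery of \cite{lecycles} at the cost of the (standard but fussy) justification that $g_a^{-1}(\eta)\cap B_\epsilon$ is the Milnor fiber of $g_0$ for $0<|\eta|\ll|a|\ll\epsilon$, which you correctly flag as the delicate point; the paper's route outsources exactly that analysis to the cited L\^e-number results. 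A minor remark in your favor: your verification that $\0$ is isolated in $\Sigma_{{}_{\operatorname{top}}}h$ and that $S\cap\Sigma(h\circ F)=\emptyset$ is actually more explicit than the paper's, which simply asserts that $F$ has an isolated instability at $\0$.
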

\begin{proof}
We recall the following formula for the Milnor number of $g_{|_{V(t)}}$ at $\0$ \cite{lecycles}:
$$
\mu_\0 \left ( g_{|_{V(t)}} \right ) = \left ( \Gamma_{g,t}^1 \cdot V(t) \right )_\0 + \left ( \Lambda_{g,t}^1 \cdot V(t) \right )_\0,
$$
where $\Gamma_{g,t}^1$ is the relative polar curve of $g$ with respect to $t$, and $\Lambda_{g,t}^1$ is the one-dimensional L\^{e} cycle of $g$ with respect to $t$.  By our genericity assumption on the unfolding parameter $t$, we have $\left ( \Lambda_{g,t}^1 \cdot V(t) \right )_\0 = \delta$. Since the homotopy type of the complex link of $X$ at $\0$ is that of a bouquet of $\left ( \Gamma_{g,t}^1 \cdot V(t) \right )_\0$  $n$-spheres (see, for example, \cite{gencalcvan}), and we know that the unfolding function $F$ has an isolated instability at $\0$, it follows that this number of $n$-spheres is also given by the Milnor number $\mu_\0( t_{|_{X}} )$.  Consequently, \corref{cor:eulermulti} becomes
$$
\mu_\0(t_{|_X}) = -r+ 1 + \sum_{k \geq 2} (k-1) \chi(X_k \cap M_{t_{|_X},\0}).
$$
By assumption, $\chi(X_2 \cap M_{t_{|_X},\0})$ is the only non-zero summand in the above equation, and it is immediately seen to be the number of double points of $X \cap V(t)$ appearing in a stable perturbation.  Thus,
$$
\mu_\0 \left ( g_{|_{V(t)}} \right ) = 2 \delta - r + 1
$$
as desired. 
\end{proof}

\smallskip

\section{Questions and Future Directions}

If $\0$ is an isolated point in $\Sigma_{{}_{\operatorname{top}}}h$, then \thmref{thm:isol} provides a nice way of calculating the only non-zero the only non-zero cohomology group of the Milnor fiber of $h$.

However,  even if $S\cap\Sigma(h\circ F)=\emptyset$, it is unclear how much effectively calculable data about the cohomology of $M_{h,\0}$ one can extract from \corref{cor:corles} if $\dim_\0\Sigma_{{}_{\operatorname{top}}}h>0$  and $n\geq 3$ (so $\dim_\0D\geq 2$). Yes, we would know that
$$\widetilde H^{j}(M_{h, \0};\Z)\cong\widetilde\hyp^{j-1}(D\cap M_{h,\0}; \Ndot[-n+1]),$$ 
but this hypercohomology on the right is highly non-trivial to calculate. There is a spectral sequence that one could hope to use, but that does not seem to yield manageable data.

\medskip

So the question is: if $\dim_\0\Sigma_{{}_{\operatorname{top}}}h>0$  and $n\geq 3$, how do we say anything useful about $\widetilde\hyp^{j-1}(D\cap M_{h,\0}; \Ndot[-n+1])$?

\vskip 0.3in

An interesting direction of research might be to eliminate the finite map $F$ altogether. In the setting of this paper, the fact that the stalk cohomology of ${\mathbf{I}}_{{}_X}^\bullet$ is given by, for all $x\in X$,
$$
H^k({\mathbf{I}}_{{}_X}^\bullet)_x\cong
\begin{cases}
\Z^{m(x)}, &\textnormal{ if } k=-n\\
0, &\textnormal{ otherwise}
\end{cases}
$$
makes it seem as though it might be worthwhile to define, in general, {\it virtually parameterizable hypersurfaces} (VPHs) as those hypersurfaces for which the intersection cohomology has such a form. One could then study deformations of a given VPH via a family of VPHs.

\section{The $(r-1)$-reduction Lemma}\label{sec:reduction}

In this section, we prove a lemma which we used to justify the terminology ``$(r-1)$-reduced cohomology'' in \defref{def:reduced}. Note that, although $\Ndot$ is perverse, we use the shifted complex $\Ndot[-n+1]$ throughout so that the non-zero stalk cohomology is in degree $0$, in order to make using $\Ndot[-n+1]$ easier to think of as just using constant $\Z$ coefficients, but with multiplicities.

We remind the reader of our earlier convention:  the reduced cohomology of the empty set, $\widetilde H^k(\emptyset; \Z)$, is zero in all degrees other than degree $-1$, and $\widetilde H^{-1}(\emptyset; \Z)=\Z$.

\begin{lem}\label{lem:reduced}

\ 

\begin{enumerate} 
\item For all $k$,
$$H^k(\phi_h\Z^\bullet_X)_\0\cong\widetilde H^{k}(M_{h, \0};\Z),$$
 which is possibly non-zero only for $n-s-1\leq k\leq n-1$, where $s:=\dim_\0\Sigma_{{}_{\operatorname{top}}}h\leq n$. Furthermore, when $k=-1$, this cohomology is non-zero  if and only if $h$ is identically zero (so that $M_{h, \0}=\emptyset$).

\medskip

\item For all $k$,
$$H^k(\phi_hF_*\Z_\W^\bullet)_\0\cong \bigoplus_i \widetilde H^{k}(M_{h\circ F, p_i};\Z),$$ which is possibly non-zero only for \hbox{$n-\hat s-1\leq k\leq n-1$}, where 
$$\hat s:=\operatorname{max}_i\{\dim_{p_i}\Sigma(h\circ F)\}\leq n.$$
 Furthermore, when $k=-1$, this cohomology is non-zero  if and only if $h$ is identically zero on at least one irreducible component of $X$.

\medskip

\item $H^{k}(\phi_h\Ndot[-n+1])_\0$ is possibly non-zero only for $-1\leq k\leq n-2$. Furthermore, if $h$ is not identically zero on any irreducible component of $D$, i.e., if $\dim_\0 D\cap V(h)\leq n-2$, then $H^{-1}(\phi_h\Ndot[-n+1])_\0=0$.

\smallskip

We also have:

\smallskip

\begin{itemize}
\item For $k\neq -1\textnormal{ or }0$,
$$H^{k}(\phi_h\Ndot[-n+1])_\0\cong \hyp^{k}(M_{h,\0}\cap D; \Ndot[-n+1]).$$

\smallskip

\item If $r=1$, then, for all $k$,
$$
H^{k}(\phi_h\Ndot[-n+1])_\0 \cong  \hyp^{k}(M_{h,\0}\cap D; \Ndot[-n+1]).
$$ 

\smallskip

\item There is an exact sequence

\smallskip

$0\rightarrow H^{-1}(\phi_h[-1]\Ndot[-n+1])_\0\rightarrow \Z^{r-1}\rightarrow\hfill$
\smallskip
$\hfill \hyp^{0}(M_{h,\0}\cap D; \Ndot[-n+1])\rightarrow H^{0}(\phi_h[-1]\Ndot[-n+1])_\0\rightarrow 0.$

\end{itemize}

\end{enumerate}
\end{lem}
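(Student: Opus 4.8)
The plan is to treat the three items largely independently, since Items (1) and (2) are essentially restatements of standard facts (with bookkeeping for the empty-set convention), while Item (3) is the genuinely new content and requires the short exact sequence $(\ddagger)$.

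\emph{Item (1).} First I would invoke the standard isomorphism $H^k(\phi_h\Z_X^\bullet)_\0 \cong H^{k+1}(B^\circ_\epsilon(\0)\cap X,\ M_{h,\0};\Z)$ together with the fact (recalled in \secref{sec:reduction}'s preamble) that, because $B^\circ_\epsilon(\0)\cap X$ is contractible, this reduces to $\widetilde H^{k}(M_{h,\0};\Z)$ — and I would point to the Convention precisely so that this works when $M_{h,\0}=\emptyset$. The degree bounds $n-s-1\le k\le n-1$ are exactly \propref{prop:leprop}. The ``if and only if'' in degree $-1$ is then immediate: $\widetilde H^{-1}(M_{h,\0};\Z)=\Z$ iff $M_{h,\0}=\emptyset$ iff $h\equiv 0$ on (a neighborhood of $\0$ in) $X$.

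\emph{Item (2).} Since $F$ is finite, proper base change gives $\phi_h F_*\Z_\W^\bullet \cong F_*\phi_{h\circ F}\Z_\W^\bullet$, and taking the stalk at $\0$ (again using that $F$ is finite, so the stalk of $F_*$ is the direct sum over the fiber $F^{-1}(\0)=S$) yields $H^k(\phi_h F_*\Z_\W^\bullet)_\0 \cong \bigoplus_i H^k(\phi_{h\circ F}\Z_\W^\bullet)_{p_i} \cong \bigoplus_i \widetilde H^k(M_{h\circ F,p_i};\Z)$. The degree bounds follow from applying \propref{prop:leprop} at each $p_i$ on the smooth space $\W$ (where $\Z_\W^\bullet[n]$ is perverse), with $\hat s$ the max of the local dimensions of $\Sigma(h\circ F)$. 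The degree $-1$ statement follows as in Item (1): some summand is $\Z$ iff $M_{h\circ F,p_i}=\emptyset$ for some $i$, i.e., $h\circ F\equiv 0$ on some $\W_i$, i.e., $h\equiv 0$ on the corresponding irreducible component $F(\W_i)$ of $X$.

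\emph{Item (3), the main point.} Apply $\phi_h[-1]$ to $(\ddagger)$; since $\phi_h[-1]$ is exact on perverse sheaves, this gives a short exact sequence of perverse sheaves, hence a long exact sequence on stalk cohomology at $\0$. Writing this out, splicing in Items (1) and (2) together with the known stalk of $\Ndot$ (namely $\Z^{r-1}$ at $\0$, concentrated in degree $-n+1$, so $\phi_h[-1]\Ndot[-n+1]$ has stalk bounded in degrees $-1\le k\le n-2$ once one uses the support bound $\dim_\0 D = n-1$ and \propref{prop:leprop}-type reasoning applied to the perverse sheaf $\Ndot$), produces the exact sequence
$$
0\to H^{-1}(\phi_h[-1]\Ndot[-n+1])_\0\to \widetilde H^{-1}(M_{h,\0};\Z)\to \bigoplus_i\widetilde H^{-1}(M_{h\circ F,p_i};\Z)\to \cdots
$$
but I want the displayed four-term sequence ending in $\hyp^0(M_{h,\0}\cap D;\Ndot[-n+1])$, so the key remaining identification is
$$
H^k(\phi_h[-1]\Ndot[-n+1])_\0 \cong \hyp^k(M_{h,\0}\cap D;\ \Ndot[-n+1])\quad\text{for } k\ne -1,0,
$$
and an analysis of what happens in degrees $-1,0$. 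For $k\ne -1,0$ this comes from the hypercohomology-of-a-pair description of the vanishing cycle stalk, $H^k(\phi_h[-1]\Ndot[-n+1])_\0\cong \hyp^{k}\big(B^\circ\cap D,\, M_{h,\0}\cap D;\ \Ndot[-n+1]\big)$, combined with the fact that $\hyp^*(B^\circ\cap D;\Ndot[-n+1])$ is concentrated in degree $0$ where it equals $\Z^{r-1}$ — this is the local calculation of the hypercohomology of $\Ndot$ on a contractible neighborhood, which follows because $\Ndot$ is the intermediate-extension-type kernel in $(\ddagger)$ and the stalk at $\0$ is $\Z^{r-1}$; more carefully, one uses the long exact sequence of $(\ddagger)$ on a small ball, where $\hyp^*(\Z_X[n])$ and $\hyp^*(\Idot_X)$ on a contractible $X$-neighborhood are each $\Z$ in degree $-n$. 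Feeding this into the long exact sequence of the pair gives the isomorphism for $k\ne -1,0$ and leaves exactly the displayed four-term exact sequence
$$
0\to H^{-1}(\phi_h[-1]\Ndot[-n+1])_\0\to \Z^{r-1}\to \hyp^0(M_{h,\0}\cap D;\Ndot[-n+1])\to H^0(\phi_h[-1]\Ndot[-n+1])_\0\to 0.
$$
The case $r=1$ then follows because $\Z^{r-1}=0$, forcing the isomorphism in all degrees. Finally, the statement that $H^{-1}(\phi_h[-1]\Ndot[-n+1])_\0=0$ when $\dim_\0 D\cap V(h)\le n-2$ follows by applying the \propref{prop:leprop}-style degree estimate to the perverse sheaf $\Ndot[-n+1]$ (stalk in degrees $\ge 0$ on a space where $\dim_\0(\text{support}\cap V(h))\le n-2$ forces $\phi_h[-1]$ stalk in degrees $\ge 0$), or, equivalently, from the long exact sequence together with $\widetilde H^{-1}(M_{h,\0})=0$ in that range.

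\emph{Main obstacle.} The delicate step is the local hypercohomology computation $\hyp^*\big(B^\circ_\epsilon(\0)\cap D;\ \Ndot[-n+1]\big)\cong \Z^{r-1}$ concentrated in degree $0$ — i.e., that on a small enough neighborhood the hypercohomology of the multiple-point complex is its stalk at $\0$. This is not formal from the stalk description of $\Ndot$; it should be extracted either from the distinguished triangle $(\ddagger)$ restricted to the ball (where the two other terms have known, simple local hypercohomology because both $\Z_X[n]$ and $\Idot_X$ have hypercohomology $\Z[n]$ on a contractible neighborhood — the latter being the cone formula for intersection cohomology), or, if one wants to avoid invoking the cone formula for $\Idot_X$, directly from the conical structure of a small neighborhood and a careful Mayer--Vietoris / attaching argument. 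I expect this to be where the real care is needed; the rest is diagram-chasing the long exact sequence of $(\ddagger)$ under $\phi_h[-1]$ and matching terms with Items (1) and (2).
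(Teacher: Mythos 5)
Your route is the same as the paper's: items (1) and (2) via the hypercohomology of the pair $(B\cap X, M_{h,\0})$ together with proper base change for the finite map $F$, and item (3) via the long exact sequence of the pair $(B\cap D, M_{h,\0}\cap D)$ with coefficients in $\Ndot$, anchored by the local computation of $\hyp^*(B\cap D;\Ndot)$. One remark before the substantive point: the step you flag as the ``main obstacle'' is in fact formal and needs no appeal to $(\ddagger)$ --- for any bounded constructible complex $\Adot$ and sufficiently small $\epsilon$, the local conical structure gives $\hyp^*\bigl(B_\epsilon(\0)\cap \operatorname{supp}\Adot;\Adot\bigr)\cong H^*(\Adot)_\0$, which is exactly the fact the paper invokes to get $\Z^{r-1}$ concentrated in degree $-n+1$.

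The substantive issue is that your ``more careful'' verification of that local computation via $(\ddagger)$ is wrong as written: on a contractible $X$-neighborhood $B\cap X$ of $\0$, the hypercohomology of ${\mathbf{I}}_{{}_X}^\bullet$ is $\Z^{r}$ in degree $-n$, not $\Z$. Indeed it equals the stalk $H^{-n}({\mathbf{I}}_{{}_X}^\bullet)_\0\cong\Z^{m(\0)}=\Z^{r}$, or, computed directly, $\hyp^*\bigl(B\cap X;F_*\Z^\bullet_\W[n]\bigr)\cong H^{*+n}\bigl(F^{-1}(B);\Z\bigr)$ where $F^{-1}(B)$ is a disjoint union of $r$ contractible pieces; the cone formula for intersection cohomology returns the low-degree intersection cohomology of the link, which is $\Z^r$ here because the small resolution has $r$ local sheets. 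Had both terms of $(\ddagger)$ contributed only $\Z$, the long exact sequence on $B$ would force $\hyp^{-n+1}(B\cap D;\Ndot)=0$ rather than $\Z^{r-1}$, and the four-term exact sequence in degrees $-1,0$ --- the whole point of the $(r-1)$-reduction --- would collapse. With the correct value $\Z^r$ and the injective diagonal map $\Z\to\Z^r$, your check does recover $\coker(\Z\to\Z^r)\cong\Z^{r-1}$ in degree $-n+1$, and the rest of your argument (the vanishing of $H^{-1}$ when $\dim_\0 D\cap V(h)\leq n-2$ via perversity and the support bound, the $r=1$ case, and the degree bounds) then goes through and agrees with the paper's proof.
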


\begin{proof}
Let $B$ denote a small open ball around the origin in $\C^{n+1}$. Then, for every bounded, constructible complex $\Adot$ on $X$, if we let $Y=\operatorname{supp}\Adot$, then
$$
H^k(\phi_h[-1]\Adot)_\0\cong\hyp^k(B\cap X, M_{h,\0}; \Adot)\cong  \hyp^k(B\cap Y, M_{h,\0}\cap Y ; \Adot),
$$
where this hypercohomology group fits into the hypercohomology long exact sequence of the pair $(B\cap X, M_{h,\0})$.

Consequently, using $\Adot = \Z^\bullet_X[n]$, we find that $H^k(\phi_h[-1]\Z^\bullet_X[n])_\0$ is, in fact, equal to the standard reduced cohomology of the Milnor fiber $\widetilde H^{k+n-1}(M_{h, \0};\Z)$, {\bf provided} that we use our convention on the reduced cohomology of the empty set.

\smallskip

Suppose instead that we use $\Adot=F_*\Z_\W^\bullet[n]$. Then, by the base change formula \cite{kashsch}, $\phi_h[-1]F_*\Z_\W^\bullet[n]$ is naturally isomorphic to $\widehat F_*\phi_{h\circ F}[-1]\Z_\W^\bullet[n]$, where $\widehat F$ denotes the map induced by $F$ from $F^{-1}h^{-1}(0)$ to $h^{-1}(0)$.

Therefore,
$$
H^k(\phi_h[-1]F_*\Z_\W^\bullet[n])_\0\cong \bigoplus_i H^k(\phi_{h\circ F}[-1]\Z_\W^\bullet[n])_{p_i},
$$
which, from our work above, implies that
$$
H^k(\phi_h[-1]F_*\Z_\W^\bullet[n])_\0\cong \bigoplus_i \widetilde H^{k+n-1}(M_{h\circ F, p_i};\Z).
$$

\smallskip

Now we need to look at the more complicated case where $\Adot=\Ndot$. We find
$$
H^k(\phi_h[-1]\Ndot)_\0\cong \hyp^k(B\cap D, M_{h,\0}\cap D ; \Ndot),
$$
and the long exact sequence of the pair, together with the fact that we know $\hyp^*(B\cap D;\Ndot)\cong H^*(\Ndot)_\0$, gives us the exact sequence
$$
\cdots\rightarrow \hyp^{-n}(M_{h,\0}\cap D; \Ndot)\rightarrow H^{-n+1}(\phi_h[-1]\Ndot)_\0\rightarrow \Z^{r-1}\rightarrow \hyp^{-n+1}(M_{h,\0}\cap D; \Ndot)\rightarrow 
$$
$$
H^{-n+2}(\phi_h[-1]\Ndot)_\0\rightarrow 0\rightarrow \hyp^{-n+2}(M_{h,\0}\cap D; \Ndot)\rightarrow H^{-n+3}(\phi_h[-1]\Ndot)_\0\rightarrow 0\rightarrow\cdots.
$$

\medskip

We claim that $\hyp^{k}(M_{h,\0}\cap D; \Ndot)=0$ for all $k\leq -n$. This follows immediately from the fact that 
$\hyp^{-n}(M_{h,\0}\cap D; \Ndot)\cong H^{-n+1}(\psi_h[-1]\Ndot)_\0$, and $\psi_h[-1]\Ndot$ is a perverse sheaf supported on $\overline{D-V(h)}\cap V(h)$, which has dimension less than or equal to $n-2$. Since we also know that $H^k(\Ndot)_\0=0$ for all $k\leq -n$, we conclude the following:

\begin{itemize}
\smallskip
\item For all $k\leq -n$, $H^k(\phi_h[-1]\Ndot)_\0\cong \hyp^{k}(M_{h,\0}\cap D; \Ndot)=0$.
\smallskip
\item For all $k\geq -n+3$, 
$$H^k(\phi_h[-1]\Ndot)_\0\cong \hyp^{k-1}(M_{h,\0}\cap D; \Ndot)\cong \hyp^{k+n-2}(M_{h,\0}\cap D; \Ndot[-n+1]).$$
\smallskip
\item We have an exact sequence
$$
0\rightarrow H^{-n+1}(\phi_h[-1]\Ndot)_\0\rightarrow \Z^{r-1}\rightarrow \hyp^{-n+1}(M_{h,\0}\cap D; \Ndot)\rightarrow H^{-n+2}(\phi_h[-1]\Ndot)_\0\rightarrow 0.
$$
\end{itemize}

If $\dim_\0 D\cap V(h)\leq n-2$, then $\phi_h[-1]\Ndot$ is a perverse sheaf which is supported on a set of dimension at most $n-2$; the stalk cohomology in degrees less than $-(n-2)=-n+2$ is zero.  Therefore, in this case, 
$$H^{-n+1}(\phi_h[-1]\Ndot)_\0\cong H^{-1}(\phi_h\Ndot[-n+1])_\0=0.$$
\end{proof}

\bigskip

\printbibliography
\end{document}